\newtheorem{thm}{Theorem}[section]
\newtheorem{lem}[thm]{Lemma}
\newtheorem{prop}[thm]{Proposition}
\newtheorem{cor}[thm]{Corollary}
\newtheorem{cond}{Condition}
\newtheorem{defn}[thm]{Definition}
\newtheorem{rem}[thm]{Remark}
\renewcommand{\a}{\textfrak{a}}%\textswab{a}}
\newcommand{\Dd}{\mathscr{D}}
\newcommand{\E}{\mathscr{E}}
\newcommand{\f}{\mathbf{f}}
\newcommand{\F}{\mathscr{F}}
\newcommand{\g}{\mathrm{g}}
\newcommand{\Hh}{\mathscr{H}}
\renewcommand{\L}{\mathscr{L}}
\newcommand{\Ll}{\mathfrak{L}}
\newcommand{\M}{\mathbf{M}}
\newcommand{\Mm}{\mathcal{M}}
\renewcommand{\P}{\mathscr{P}}
\newcommand{\R}{\mathbb{R}}
\renewcommand{\u}{u_{\star}}
\newcommand{\V}{\mathscr{V}}
\newcommand{\e}{\varepsilon}
\newcommand{\curr}[1]{\llbracket {#1}\rrbracket}  
\newcommand{\esssup}{\mathop{\rm ess\,sup}}       
\newcommand{\ttau}{\boldsymbol{\tau}}       
\renewcommand{\ss}{\subset\subset}       
\renewcommand{\div}{{\rm div }}
\newcommand{\reg}{\mathop{\rm reg}}       
\newcommand{\spt}{\mathop{\rm spt}}       
\newcommand{\sing}{\mathop{\rm sing}}       
\def\dashint{\,\ThisStyle{\ensurestackMath{%
  \stackinset{c}{.2\LMpt}{c}{.5\LMpt}{\SavedStyle-}{\SavedStyle\phantom{\int}}}%
  \setbox0=\hbox{$\SavedStyle\int\,$}\kern-\wd0}\int}
\newcommand{\lelbow}{\mathbin{
\vrule height 1.6ex depth 0pt width 0.13ex
\vrule height 0.13ex depth 0pt width 1.3ex}}
\newlength{\bibitemsep}\setlength{\bibitemsep}{.7\baselineskip plus .5\baselineskip minus .05\baselineskip}
\newlength{\bibparskip}\setlength{\bibparskip}{0pt}
\let\oldthebibliography\thebibliography
\renewcommand\thebibliography[1]{%
  \oldthebibliography{#1}%
  \setlength{\parskip}{\bibitemsep}%
  \setlength{\itemsep}{\bibparskip}%
}
\begin{document}

\title{Continuity of minimizers to weighted least gradient problems}

%author information
\author{Andres Zuniga}%first author
\address{Department of Mathematics, Indiana University, 831 E 3rd St, Bloomington, IN 47405, USA.} 
\email{ajzuniga@indiana.edu.}

\begin{abstract}
We revisit the question of existence and regularity of minimizers to the weighted least gradient problem with Dirichlet boundary condition
\[
\inf\left\{\int_{\Omega}\a(x)|Du|:\, u\in BV(\Omega),\; u|_{\partial\Omega}=g\right\},
\]
where $g\in C(\partial\Omega)$, and $\a\in C^2(\bar{\Omega})$ is a weight function that is bounded away from zero. Under suitable geometric conditions on the domain $\Omega\subset \R^n$, we construct continuous solutions of the above problem for any dimension $n\geq 2$, by extending the Sternberg-Williams-Ziemer technique~\cite{sternberg1992existence} to this setting of inhomogeneous variations. We show that the level sets of the constructed minimizer are minimal surfaces in the conformal metric $\a^{2/(n-1)}I_n$. This result complements the approach in~\cite{jerrard2018existence} since it provides a continuous solution even in high dimensions where the possibility exists for level sets to develop singularities. The proof relies on an application of a strict maximum principle for sets with area minimizing boundary established by Leon Simon in~\cite{simon1987strict}.
\end{abstract}

\maketitle

{\bf MSC (2010).} Primary: 49Q20; Secondary: 49J52, 49Q10, 49Q15. 

{\bf Keywords.} least gradient problem, weighted perimeter, barrier condition.\par

\smallskip

%%%%%%%%%%%%%%%%%%%%%%%%%%%%%%%%%%%%%%%%%
\section{Introduction}
%%%%%%%%%%%%%%%%%%%%%%%%%%%%%%%%%%%%%%%%%

In this article we revisit the question of existence and regularity of solutions in higher dimensions to weighted least gradient problems subject to a Dirichlet boundary condition 
\begin{equation}\label{eqn:simpleLGP}
\inf\left\{\int_{\Omega}\a(x)|Du|:\, u\in BV(\Omega),\; u|_{\partial\Omega}=g\right\},
\end{equation}
where $g\in C(\partial\Omega)$, and $\a\in C^2(\bar{\Omega})$ is a weight function that is bounded away from zero. Existence, comparison and uniqueness results in all dimensions were recently established in~\cite{jerrard2018existence} over a general class of integrands that includes the present case, and the solution was shown to be continuous in dimensions $n\leq 7$. The restriction on dimension in~\cite{jerrard2018existence} is due to an appeal to the regularity theory of hypersurfaces minimizing parametric elliptic functionals of Almgren, Schoen and Simon~\cite{schoen1977regularity,schoen1982new}. The major thrust of this article is to establish such a continuity result for a minimizer of~\eqref{eqn:simpleLGP} in higher dimensions $n\geq 8$ as well, using a constructive argument along the lines of that used in~\cite{sternberg1992existence} for the standard case $\a\equiv 1$. 

Going back to the work of Bombieri, De Giorgi and Giusti in~\cite{bombieri1969minimal}, extensive studies of functions of least gradient have been carried out in different contexts. The majority of the existing results for least gradient problems study the case of Dirichlet boundary conditions (see for instance~\cite{moradifam2018uniqueness,jerrard2018existence,gorny2016planar,mazon2016euler}). Nonetheless, Neumann and other types of boundary conditions have been explored (cf.~\cite{moradifam2017least,nachman2016weighted,nachman2007conductivity}). In the recent years many authors have spent a significant effort to study weighted least gradient problems and further generalizations, due to its various applications to such areas as imaging conductivity problems, reduced models in superconductivity and superfluidity, models for a description of landsliding, and relaxed models in the theory of elasticity and in optimal design, among others. A list of important investigations in these directions can be found in~\cite{athavale2017weighted,baldo2013vortex,gorny2016planar, gorny2017special, hoell2014current, jerrard2018existence, mazon2016euler, kohn1987constrained,moradifam2016existence, moradifam2017least, nashed2011structural,ionescu2005generalized, spradlin2014not,sternberg1992existence,sternberg1993dirichlet,
moradifam2012conductivity,moradifam2018uniqueness,nachman2007conductivity,nachman2009recovering,nachman2011current,nachman2016weighted}. In addition, the time dependent notion of total variation flow has proved to be useful in image processing including denoising and restoration, see for example~\cite{belletini2002total,andreu2004total,andreu2005total,moll2005anisotropic,caselles2009anisotropic}. Further generalizations of least gradient problems in the metric space setting have been explored quite recently in~\cite{hakkarainen2015stability,korte2016notions,lahti2017domains}.
\smallskip
  
Let us now introduce the problem more precisely, and the main result of this article. Given $n\geq 2$ arbitrary, a bounded Lipschitz domain $\Omega\subset \R^n$, and a weight function $\a\in C^2(\bar{\Omega})$ satisfying the following non-degeneracy condition
\begin{equation}\label{ND}
\min_{\bar{\Omega}}\a\geq \alpha,
\end{equation} 
for some $\alpha\in (0,\infty)$, we deal with the study of minimizers of the weighted $\a$-variation functional over the set of $BV(\Omega)$ functions that coincide on the boundary with some data $g:\partial\Omega\to\R$ in the sense of $BV$-traces. That is,
\begin{equation}\label{aLGP}\tag{$\a$\,LGP}
\inf_{u\in BV_g(\Omega)}\int_{\Omega}\a(x)|Du|,
\end{equation}
where the admissible class is defined via
\begin{equation}\label{eqn:admissible}
BV_g(\Omega):=\{u\in BV(\Omega): \;\forall x\in\partial\Omega, \;\lim_{r\to 0}\, \esssup_{y\in\Omega\cap|x-y|<r} |u(y)-g(x)|=0\}.
\end{equation}
Here $BV(\Omega)$ denotes the class of functions of bounded variation in $\Omega$ (see \cite{giusti1984minimal}). 

Let us recall the notion of $\a$-variation of $u\in BV(\Omega)$ induced by the continuous function $\a:\Omega\to (0,\infty)$, uniformly bounded away from zero.  As introduced by Amar and Belletini in~\cite{amar1994notion}, the $\a$-variation of $u\in BV(\Omega)$ in $\Omega$ is given by
\begin{equation}\label{eqn:defavar}
\int_U\a(x)|Du|:=\sup\left\{\int_Uu\,\div Y\,dx:\; Y\in C^{\infty}_c(U;\R^n),\; |Y(x)|\leq \a(x)\;\; \forall x\in\Omega\right\}.
\end{equation}
This corresponds to the definition of $\phi$-variation of $u$ in~\cite{amar1994notion} for the choice of $\phi(x,\xi)=\a(x)|\xi|$, which is described in terms of the dual norm $\phi^0(x,\xi):=\sup\{\xi\cdot p:\phi(x,p)\leq 1\}$. In~\eqref{eqn:defavar} we have used the fact that $\phi^0(x,\xi)=|\xi|/\a(x)$ for such choice of an inhomogeneous, isotropic norm $\phi$. This notion gives rise to a Radon measure on $\R^n$ induced by $u$ that acts on Borel sets via $B\mapsto \int_{B}\a(x)|Du|$, called the $\a$-variation measure of $u$. By analogy, given any Caccioppoli set $E\subset\R^n$ (i.e.~set of finite perimeter, see~\cite{giusti1984minimal}) we can construct an $\a$-perimeter measure associated with $E$, which is the Radon measure that on any Borel set $B$ assigns the value
\[
\P_{\a}(E,B):=\int_B\a(x)|D\chi_E|,
\]
where $\chi_E$ is the characteristic function of $E$.

\smallskip

The main concern of this work is to establish the existence of a {\em continuous minimizer} of~\eqref{aLGP} even in the possible presence of singularities for the level sets of the solution, when continuous boundary data $g\in C(\partial\Omega)$ is considered and for a class of domains $\Omega$ satisfying suitable geometric conditions. We will require that $\Omega$ is a bounded Lipschitz domain with boundary $\partial\Omega$ satisfying a positivity condition on a sort of generalized mean curvature related to the weight function $\a$. This will be referred as the {\em barrier condition}, and the precise statement is 
\begin{cond}[Barrier condition]\label{cond:barrier}
For every $x_0\in\partial \Omega$ there exists $\e_0>0$ such that for all $\e\in(0,\e_0)$ if $V_*\subset\R^n$ is a minimizer of
\begin{equation}\label{eqn:geomCiii}
\inf\{\P_{\a}(W,\R^n): \,W\subset \Omega,\;(\Omega\setminus W)\subset B_{\e}(x_0)\},
\end{equation}
then  
\[
\partial V_*\cap\partial\Omega\cap B_{\e}(x_0)=\emptyset.
\] 
\end{cond}

\smallskip
The boundaries of such domains $\Omega$ are not locally $\a$-area minimizing with respect to interior variations (cf.~\cite{jerrard2018existence}). In fact, the latter implies that domains $\Omega$ satisfying the barrier condition must necessarily have connected boundary. It is worth noting that even for domains $\Omega$ satisfying the barrier condition~\eqref{eqn:geomCiii}, it has been recently pointed out by Spradlin and Tamasan in~\cite{spradlin2014not} that the existence of minimizers to
\[
\inf\left\{\int_{\Omega}|Du|:\, u\in BV(\Omega),\; u|_{\partial\Omega}=g\right\}
\]
may fail for some choices of discontinuous boundary data $g$. 

An existence and continuity result of minimizers was already established by Jerrard, Moradifam and Nachman in~\cite{jerrard2018existence} for a more general version of the least gradient problem
\begin{equation}\label{phiLGP}\tag{$\varphi$\,LGP}
\inf_{u\in BV_g(\Omega)}\int_{\Omega}\varphi(x,Du),
\end{equation}
for the admissible class given in~\eqref{eqn:admissible}, $g\in C(\partial\Omega)$, and a function $\varphi(x,\xi)$ that, among other properties is convex, continuous, and 1-homogeneous with respect to the $\xi$-variable. They prove existence and comparison results (uniqueness) for~\eqref{phiLGP} valid in all dimensions $n\geq 2$ for domains $\Omega$ satisfying a barrier condition suited to a general class of inhomogeneous anisotropic $\varphi$-perimeter functionals. In contrast, their regularity theorem established for~\eqref{phiLGP}, under sharp conditions, is valid in low dimensions $n=2,3$ only. In a related work, Moradifam has argued that the structure of the level sets of minimizers to~\eqref{phiLGP} are determined by a divergence free vector-field (see~\cite{moradifam2016existence} for a precise statement).

Despite the dimensionality restriction of the regularity result in~\cite{jerrard2018existence}, it is nonetheless the case that 
when~\eqref{aLGP} is considered, i.e. $\varphi(x,\xi)=\a(x)|\xi|$, their result applies up to dimension $n\leq 7$ by virtue of the regularity theory of minimal hypersurfaces, with respect to an area functional induced by a Riemannian metric (see Remark 4.8 in \cite{jerrard2018existence} and references therein). In light of this, a major thrust of the present paper is to establish such a continuity result for a minimizer of~\eqref{aLGP} in higher dimensions $n\geq 8$.  

The approach we will adopt in this article consists of applying the Sternberg-Williams-Ziemer program in~\cite{sternberg1992existence} to construct continuous minimizers of the weighted least gradient problem subject to a Dirichlet boundary condition. In fact, a secondary reason for this investigation has been to determine whether this technique carries over to the setting of weighted least gradient problems. Their method is based on the co-area formula and on an auxiliary geometric variational problem to identify the level sets of such minimizers. Indeed, in~\cite{bombieri1969minimal} it was shown that the superlevel sets of a continuous function of least gradient are area-minimizing, that is, the characteristic functions of those sets are functions of least gradient. Conversely, the authors in~\cite{sternberg1992existence} proved the existence and continuity of a function of least gradient for every dimension $n\geq 2$, by explicitly constructing each of its superlevel sets in such a way that they are area-minimizing and reflect the boundary condition,  as long as two geometric conditions of $\partial\Omega$ are satisfied, referred as a weak non-negative mean curvature condition and the assumption that $\partial\Omega$ is not locally area-minimizing with respect to interior set variations. Their proof relies, among other things, on a strict maximum principle for area-minimizing sets established by Simon in~\cite{simon1987strict}.

In adapting the approach of~\cite{sternberg1992existence}, we concentrate our efforts in establishing a type  of maximum principle for sets that minimize the weighted $\a$-perimeter $\P_{\a}$ in $\Omega$, so as to ensure the {\em strict separation} of level sets of the candidate of a minimizer to~\eqref{eqn:simpleLGP}, from which the continuity of this minimizer will follow; see Theorem~\ref{thm:setmaxpple}. This maximum principle generalizes the corresponding one in~\cite{sternberg1992existence} for sets that minimize the standard area measure ($\a\equiv 1$). In this respect, the regularity assumed on the weight function in the present article, $\a\in C^2(\bar{\Omega})$, is required in both Proposition~\ref{prop:strongmaxpple} and Corollary~\ref{cor:simonsetmaxpple} (maximum principle for hypersurfaces with smooth, and non-smooth contact point). Moreover, the same regularity of $\a(x)$ is needed in our construction, independently, for a result about partial regularity of $\a$-perimeter minimizing sets due to Schoen and Simon~\cite{schoen1977regularity}, which we state in~\eqref{eqn:regularitySSA} (see (9) in~\cite{schoen1977regularity} for the regularity assumption in general.) In contrast, under the regularity assumption $\a\in C^{1,1}(\bar{\Omega})$ and positivity $\a>0$ of the weight, the authors in~\cite{jerrard2018existence} established uniqueness (for all $n\geq 2$) and continuity (up to $n\leq 7$) of minimizers to~\eqref{eqn:simpleLGP} for rougher weight functions and continuous boundary data $g\in C(\partial\Omega)$. In fact, they prove that the regularity  $\a\in C^{1,1}(\bar{\Omega})$ is sharp, in the sense that uniqueness of minimizers breaks.  

\smallskip

We now state the main result of this article.
\begin{thm}\label{thm:main1}
For any $n\geq 2$, let $\Omega\subset\R^n$ be a bounded Lipschitz domain with boundary satisfying the barrier condition~\eqref{eqn:geomCiii} and let $\a\in C^2(\bar{\Omega})$ be a non-degenerate weight function, in the sense of~\eqref{ND}. Then for any boundary data $g\in C(\partial\Omega)$ there exists a minimizer $\u$ to~\eqref{aLGP} which is moreover a continuous function, $\u\in C(\bar{\Omega})$. Furthermore, the superlevel sets of $\u$ minimize the weighted perimeter measure $\P_{\a}(\cdot,\Omega)$ with respect to competitors meeting the boundary conditions imposed by $g$ on $\partial\Omega$.  
\end{thm}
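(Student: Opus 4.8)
The plan is to follow the Sternberg–Williams–Ziemer strategy, adapted to the weighted setting. First I would reduce matters to a dense countable set of levels: for each $t$ in a countable dense subset $T\subset\R$ chosen so that the boundary data behaves well, I would solve the auxiliary geometric problem of minimizing $\P_{\a}(E,\R^n)$ among Caccioppoli sets $E$ whose trace on $\partial\Omega$ agrees with $\{g>t\}$ in an appropriate sense — concretely, minimizing $\a$-perimeter among sets $E$ with $E\setminus\Omega$ fixed equal to a chosen representative of $\{g>t\}$ outside $\Omega$. Existence of such minimizers $E_t$ follows from the direct method using lower semicontinuity and compactness of the $\a$-perimeter (which is comparable to ordinary perimeter by the non-degeneracy hypothesis~\eqref{ND} and boundedness of $\a$). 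The barrier condition~\eqref{eqn:geomCiii} is exactly what guarantees that these minimizers attain the boundary values continuously, i.e.\ that $\partial E_t$ does not ``cling'' to $\partial\Omega$ on the wrong side; this is where the barrier hypothesis is used to control boundary behavior.

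Next I would establish the ordering and strict separation of the sets $\{E_t\}_{t\in T}$. Monotonicity ($E_s\supset E_t$ for $s<t$) is obtained by the standard cut-and-paste argument: replacing $E_s$ by $E_s\cup E_t$ and $E_t$ by $E_s\cap E_t$ cannot increase the total $\a$-perimeter because $\P_{\a}$ satisfies the submodularity inequality $\P_{\a}(A\cup B)+\P_{\a}(A\cap B)\le \P_{\a}(A)+\P_{\a}(B)$, which holds for any weighted perimeter since it holds locally for the BV measure. The crucial point is \emph{strict} separation: for $s<t$ with $g>s$ strictly containing the closure of $\{g\ge t\}$ near the boundary, I need $\overline{E_t}\cap\Omega \subset \mathrm{int}\,E_s$. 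Here I would invoke Theorem~\ref{thm:setmaxpple} — the weighted strict maximum principle for $\a$-perimeter minimizing sets — which is the generalization of Simon's strict maximum principle~\cite{simon1987strict} underlying~\cite{sternberg1992existence}: two $\a$-minimizing boundaries that are ordered and touch must coincide, and since $\partial E_s$ and $\partial E_t$ have disjoint boundary traces on $\partial\Omega$, they cannot coincide, hence cannot touch inside $\Omega$.

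Then I would define the candidate minimizer by $\u(x):=\sup\{t\in T: x\in E_t\}$ (with the usual conventions at the extremes, using the bounds $\inf g$ and $\sup g$). The strict separation of the $E_t$ translates directly into continuity of $\u$ on $\bar\Omega$: at any point $x_0$, given $\epsilon>0$, picking $s<\u(x_0)<t$ in $T$ with $t-s<\epsilon$, the point $x_0$ lies in the open set $\mathrm{int}\,E_s\setminus\overline{E_t}$ (up to handling the boundary via the barrier condition, which forces $\u|_{\partial\Omega}=g$ in the $BV_g$ sense of~\eqref{eqn:admissible}), so $\u$ stays within $[s,t]$ on a neighborhood of $x_0$. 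Finally, the co-area formula identifies $\int_\Omega \a|D\u| = \int_{-\infty}^{\infty}\P_{\a}(\{u>t\},\Omega)\,dt$, and since each superlevel set is, by construction, $\a$-perimeter minimizing against competitors with the same boundary trace, a slicing argument shows $\u$ minimizes~\eqref{aLGP}: any competitor $v\in BV_g(\Omega)$ has almost every superlevel set admissible for the corresponding geometric problem, so $\P_{\a}(\{v>t\},\Omega)\ge \P_{\a}(\{u>t\},\Omega)$ for a.e.\ $t$, and integrating gives $\int_\Omega \a|Dv|\ge \int_\Omega\a|D\u|$. The last clause of the theorem — that superlevel sets of $\u$ minimize $\P_{\a}(\cdot,\Omega)$ — is then immediate, modulo checking that the \emph{essential} superlevel sets of the constructed $\u$ coincide a.e.\ with the constructed $E_t$, which follows from their monotonicity and density in $T$.

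The main obstacle I anticipate is the strict separation step and its interaction with the boundary. Away from $\partial\Omega$, strict separation rests squarely on the weighted strict maximum principle (Theorem~\ref{thm:setmaxpple}, itself built on Proposition~\ref{prop:strongmaxpple} and Corollary~\ref{cor:simonsetmaxpple} and ultimately on~\cite{simon1987strict}), so the real work is in proving that maximum principle in the $C^2$-weighted setting — handling the possibly singular contact set requires the Schoen–Simon partial regularity~\eqref{eqn:regularitySSA} to reduce to the smooth two-sided comparison, where the $\a$-minimality yields a mean-curvature-type equation for $\partial E_t$ (level sets are minimal in the conformal metric $\a^{2/(n-1)}I_n$) to which Simon's strong maximum principle applies. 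Near $\partial\Omega$, one must verify that the barrier condition not only gives attainment of the boundary datum but is compatible with separation — i.e.\ that for $s<t$ the constructed sets remain strictly ordered all the way up to the boundary because their boundary traces $\{g>s\}$ and $\{g>t\}$ are themselves separated there by continuity of $g$. Carefully threading these two mechanisms together — interior strict maximum principle and boundary barrier — is the crux of extending~\cite{sternberg1992existence} to weighted problems.
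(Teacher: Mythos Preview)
Your outline tracks the paper's approach closely --- the Sternberg--Williams--Ziemer construction via level-set minimization, the barrier condition for boundary attainment, the strict maximum principle (Theorem~\ref{thm:setmaxpple}) for interior separation, and the co-area formula for minimality are exactly the ingredients used. Two points of divergence are worth flagging, one of which is a genuine gap.

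First, the gap: your monotonicity step is incomplete. Submodularity alone only tells you that $E_s\cup E_t$ and $E_s\cap E_t$ are \emph{again} minimizers for their respective problems; it does \emph{not} force the particular minimizers you chose to be nested. The paper handles this by a two-stage selection: after minimizing $\P_{\a}(E,\R^n)$ subject to $E\setminus\bar\Omega=\L_t\setminus\bar\Omega$ (problem~\eqref{start}), it then \emph{maximizes Lebesgue measure} among those minimizers (problem~\eqref{starprimet}). With that selection in hand, submodularity gives $\P_{\a}(E_s\cup E_t,\R^n)=\P_{\a}(E_s,\R^n)$, so $E_s\cup E_t$ is admissible in $(\star\star_s)$, whence $|E_s\cup E_t|\le |E_s|$ and therefore $|E_t\setminus E_s|=0$. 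Without some such tie-breaking device your ``standard cut-and-paste'' does not deliver $E_t\subset E_s$. Your countable-dense-$T$ variant does not sidestep this: you would still need either the volume selection or an inductive replacement scheme, and the latter interacts awkwardly with the later claim that the superlevel sets of $\u$ coincide a.e.\ with the $E_t$.

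Second, a smaller point on strict separation: the strict maximum principle only gives \emph{local} coincidence of $\partial E_s$ and $\partial E_t$ at a contact point. To promote this to a contradiction with the disjoint boundary traces, the paper invokes a topological lemma (Lemma~\ref{lem:boundarycond}, from~\cite{jerrard2018existence}) asserting that every connected component of $\reg(\partial E_t)\cap\Omega$ has closure meeting $\partial\Omega$; combined with Lemma~\ref{lem:boundary} ($\partial E_t\cap\partial\Omega\subset\{g=t\}$) this rules out a common component. Your sentence ``they cannot coincide, hence cannot touch'' compresses this into one step; the bridge is the reach-the-boundary lemma, and it is worth making explicit. Apart from these two items, your plan matches the paper's proof.
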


The paper is organized as follows. In \S2 we review some basic facts about the $\a$-variation functional and we comment on key aspects of the regularity theory for sets minimizing the $\a$-perimeter measure. 

In \S3 we establish a strict maximum principle for sets whose boundary minimize the $\a$-area, cf.~Theorem~\ref{thm:setmaxpple}. This will be done in two steps. We first address the case where the boundary sets can be locally represented as $C^2$-hypersurfaces, cf.~Proposition~\ref{prop:strongmaxpple}. The remaining case, where the hypersurfaces contain singularities, has been resolved by Leon Simon~\cite{simon1987strict} in the context of co-dimension one rectifiable currents which minimize mass. We proceed to review the concepts behind such mathematical objects in geometric measure theory, specializing in the context of Riemannian manifolds. In particular, in the setting where $\R^n$ is endowed with the metric $\g(x)=\a^{2/(n-1)}(x)\delta_{ij}dx^idx^j$ we can identify the $\a$-perimeter measure of a set $E$ (i.e~the $\a$-area of $\partial E$) with the mass of a current $\partial\curr{E}$, cf.~Theorem~\ref{thm:massequiv}. This fact allows us to apply the aforementioned result in~\cite{simon1987strict}.

The construction of the minimizer $u_{\star}$ takes place mainly in \S4 and \S5, where we introduce 
the collection of sets $\{E_t:t\in g(\partial\Omega)\}$, cf.~Proposition~\ref{prop:existence}. Here  $\overline{E_t\cap\Omega}$ will correspond, up to a $\Hh^n$-negligible set, to the $t$-superlevel set of $u_{\star}$ (Theorem~\ref{thm:constructionsoln}.) Subsequently, two key geometric properties of this collection are established, namely, the consistency of $\partial E_t$ with the boundary values at $\partial\Omega$, cf.~Lemma~\ref{lem:boundary}, and the strict separation of the sets $\{E_t\}$, cf.~Lemma~\ref{lem:strictcont}. The latter property is a consequence of Theorem~\ref{thm:setmaxpple}.

The candidate $u_{\star}$ for a minimizer of the Dirichlet problem~\eqref{aLGP} is introduced in~\S6. The admissibility and continuity properties of $u_{\star}$ follow from the properties obtained in \S5, cf.~Theorem~\ref{thm:constructionsoln}. Lastly, we argue the minimality of $u_{\star}$ in~\eqref{aLGP}, thus completing the proof of Theorem~\ref{thm:main1}.

\vskip.1in\noindent
{\bf Acknowledgments.} The author wishes to thank Peter Sternberg, Robert Jerrard, and the referee for their valuable comments and suggestions. The author was partially supported by the Hazel King Thompson fellowship from the Department of Mathematics at Indiana University.
\medskip

%%%%%%%%%%%%%%%%%%%%%%%%%%%%%%%%%%%%%%%%%
\section{Notation and preliminaries }\label{sec:notation}
%%%%%%%%%%%%%%%%%%%%%%%%%%%%%%%%%%%%%%%%%

Let us write $B_r(x)$ for the open Euclidean ball centered at $x\in\R^n$ of radius $r>0$, and we abbreviate $B_R:=B_R(0)$, unless otherwise specified. The notation $B'_r(x')$ will be reserved for balls in $\R^{n-1}$ centered at $x'\in\R^{n-1}$, where we will consistently write $x=(x',x'')\in\R^{n-1}\times\R$ for points $x\in\R^n$. With a slight abuse of notation we let $|\cdot|$ refer to the Euclidean distance between points in $\R^n$, and also to the Lebesgue measure in $\R^n$. In addition, $\Hh^{\alpha}$ corresponds to the $\alpha$-dimensional Hausdorff measure of $(\R^n,|\cdot|)$. Throughout, we will primarily employ $\Hh^{n-1}$. On the other hand, given a set $E\subset \R^n$, $E^i$ denotes the topological interior of $E$, $\bar{E}$ denotes the topological closure of $E$, and $\partial E$ denotes its topological boundary. Also, the notation $E\subset\subset F$ refers to the containment $\bar{E}\subset F^i$. We recall the measure-theoretic boundary of $E$,
\[
\partial_M E:=\{x\in\R^n:\; 0<\overline{\Theta}(E,x)\}\cap \{x\in\R^n:\underline{\Theta}(E,x)<1\},
\]
where 
\[
\overline{\Theta}(E,x):=\limsup_{r\to 0^+}|E\cap B_r(x)|/|B_r(x)|,\quad\underline{\Theta}(E,x):=\liminf_{r\to 0^+}|E\cap B_r(x)|/|B_r(x)|
\]
are the upper and lower densities of $E$ at $x$, respectively. Moreover, the {\em reduced boundary} of $E$ is the set $\partial^*E=\{x:\nu_E(x)\text{ exists}\}$ where $\nu_E(x)$ is the so called {\em measure-theoretic normal} of the set $E$, defined as the unique vector $\nu\in\R^n$ satisfying
\[
\overline{\Theta}(\{y: (y-x)\cdot \nu>0,\, y\in E\},x)=0,\quad \overline{\Theta}(\{y: (y-x)\cdot \nu<0,\,y\notin E\},x)=0.
\]
It is well-known that 
\begin{equation}\label{eqn:bdryinclusion}
\partial^*E\subset \partial_ME\subset \partial E.
\end{equation}
Moreover, $E$ is of finite perimeter if and only if $\Hh^{n-1}(\partial_ME)<\infty$; and in this case 
\[
\P(E,\Omega)=\Hh^{n-1}(\Omega\cap \partial_ME)=\Hh^{n-1}(\Omega\cap\partial^*E),
\]
cf. \cite{federer1969geometric}.
Throughout, we employ the {\em measure-theoretic closure} to represent the equivalence class of sets of finite perimeter, which differ only up to sets of $\Hh^n$-measure zero. With this convention, we let
\begin{equation}\label{eqn:convention}
x\in E \;\iff\; \overline{\Theta}(E,x)>0.
\end{equation}
It can be shown using this convention~\eqref{eqn:convention} that $\overline{\partial^*E}=\partial E$, cf. \cite[Thm.~4.4]{giusti1984minimal}. 

Suppose that $\mu$ is a Radon measure in a locally compact topological space $M$, and that $f\in L^1_{loc}(M,\mu)$. Here we adopt a notation already introduced in \cite{evans1992measure,simon1983lectures}, where we denote by $\mu\lelbow f$ the Radon measure acting on Borel sets of $M$ via
\begin{equation}\label{eqn:defelbow}
\mu\lelbow f(A):=\int_Af(x)\,d\mu(x).
\end{equation}
In addition $\lelbow$ stands for the restriction operation of a measure over a measurable set, in which case both relate by means of $\mu\lelbow A=\mu\lelbow \chi_A$.
\medskip

We continue this section by reviewing some basic properties of functions of bounded $\a$-variation and of sets of finite $\a$-perimeter. The results we revisit now hold for a broader class of weights which are continuous, $\a\in C$, and bounded away from zero, $\a>0$. For a proof of these facts we refer the reader to \cite{amar1994notion,jerrard2018existence}. 

The theory of inhomogeneous (and anisotropic) variations rests upon the following integral representation formula.
\begin{prop}[\!\!{\cite[Prop.~7.1]{amar1994notion}}] For any $u\in BV_{loc}(\R^n)$ and a bounded Borel set $B$,
\begin{equation}\label{eqn:integraldefn}
\int_B\a(x)|Du|=\int_B\a(x)|\sigma^u(x)|\,d|Du|(x),
\end{equation}
where $\sigma^u:=dDu/d|Du|\in\R^n$ denotes the Radon-Nikod\`ym density and $d|Du|$ is the total variation measure induced by $u$. Here we have $|\sigma^u|=1$ for $|Du|$-a.e $x\in B$ {\em(see \cite[\S5]{evans1992measure})}, so~\eqref{eqn:integraldefn} reduces to $\int_B\a(x)|Du|=\int_B\a(x)\,d|Du|(x)$.
\end{prop}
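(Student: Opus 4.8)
\emph{Sketch of proof.} Viewing both sides of~\eqref{eqn:integraldefn} as set functions $B\mapsto(\cdot)$, each is a Radon measure on $\R^n$: the left-hand side is the $\a$-variation measure of $u$ recalled above, and the right-hand side is $|Du|$ weighted by the continuous positive function $\a$. It therefore suffices to prove the identity when $B=U$ is open, where the left-hand side equals~\eqref{eqn:defavar}; the case of a general bounded Borel set $B$ then follows because two Radon measures coinciding on every open set coincide. Next, the polar (Radon--Nikod\'ym) decomposition of the vector measure $Du$ against $|Du|$ gives $Du=\sigma^u\,|Du|$ with $|\sigma^u(x)|=1$ for $|Du|$-a.e.\ $x$, so $\int_B\a|\sigma^u|\,d|Du|=\int_B\a\,d|Du|$ and the proposition reduces to proving $\int_U\a|Du|=\int_U\a\,d|Du|$ for $U$ open. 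The inequality ``$\le$'' is immediate: for any $Y\in C^\infty_c(U;\R^n)$ with $|Y|\le\a$, integration by parts and Cauchy--Schwarz give $\int_U u\,\div Y\,dx=-\int_U Y\cdot\sigma^u\,d|Du|\le\int_U|Y|\,|\sigma^u|\,d|Du|\le\int_U\a\,d|Du|$, and one takes the supremum over such $Y$.

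The content of the statement is the reverse inequality, for which near-optimal test fields must be produced. The formal optimizer is $Y=-\a\sigma^u$, since then $\int_U u\,\div Y\,dx=\int_U\a|\sigma^u|^2\,d|Du|=\int_U\a\,d|Du|$; the obstacle is that $-\a\sigma^u$ is merely a $|Du|$-measurable field and must be replaced by admissible smooth competitors that still satisfy the pointwise bound $|Y|\le\a$. The plan is to first approximate $-\a\sigma^u$ in $L^1(U;|Du|;\R^n)$ by fields $\tilde Y_k\in C^\infty_c(U;\R^n)$ --- possible since smooth compactly supported fields are dense in $L^1$ of a Radon measure --- and then to project back onto the constraint by a fixed smooth truncation $Y_k:=\tilde Y_k\,\chi(|\tilde Y_k|^2/\a^2)$, where $\chi\in C^\infty([0,\infty);[0,1])$ satisfies $\chi\equiv 1$ on $[0,1]$ and $\chi(s)\le s^{-1/2}$ for $s\ge 1$. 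Because $\a$ is smooth and bounded below by~\eqref{ND}, each $Y_k$ again belongs to $C^\infty_c(U;\R^n)$ and now satisfies $|Y_k|\le\a$ everywhere. The crux is that this truncation is asymptotically free: on the set $\{|\tilde Y_k|>\a\}$ one has $|\tilde Y_k|-\a\le|\tilde Y_k+\a\sigma^u|$ (using $|\sigma^u|=1$ a.e.), so the overshoot of $\tilde Y_k$ past the constraint is itself controlled in $L^1(U;|Du|)$; combined with a Chebyshev bound on $|Du|\big(\{|\tilde Y_k|>\a(1+\delta)^{1/2}\}\big)$ this yields $Y_k\to-\a\sigma^u$ in $L^1(U;|Du|;\R^n)$. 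Passing to the limit in $\int_U u\,\div Y_k\,dx=-\int_U Y_k\cdot\sigma^u\,d|Du|$ gives $\int_U\a|Du|\ge\int_U\a\,d|Du|$, and together with the easy inequality this completes the proof.

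I expect the truncation bookkeeping in the last step --- verifying that enforcing the pointwise bound does not destroy the $L^1(U;|Du|)$ convergence --- to be the only genuine obstacle, everything else being routine. Finally, I would note that only continuity of $\a$ and the non-degeneracy~\eqref{ND} are actually used here: for a merely continuous weight one approximates $\a$ uniformly by smooth weights $\a_j$ bounded below and passes to the limit, using that the $\a$-variation is monotone and positively homogeneous in the weight; thus the standing hypothesis $\a\in C^2(\bar\Omega)$, needed elsewhere in the paper, is inessential for this proposition.
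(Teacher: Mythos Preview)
The paper does not prove this proposition; it is quoted from Amar--Bellettini \cite{amar1994notion} and invoked as a known representation formula, so there is nothing in the paper to compare your argument against.

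On its own merits your sketch is correct. The reduction to open $U$ via outer regularity of Radon measures, the polar decomposition $Du=\sigma^u\,|Du|$, the easy inequality by integration by parts, and the density of $C_c^\infty$ fields in $L^1(U,|Du|;\R^n)$ are all standard and sound. The truncation step you flag as the only real obstacle also goes through once written out: split the bad set $\{|\tilde Y_k|>\a\}$ into $\{1<|\tilde Y_k|^2/\a^2\le 1+\delta\}$ and $\{|\tilde Y_k|^2/\a^2>1+\delta\}$. On the first piece $1-\chi$ is uniformly small (at most $\sup_{[1,1+\delta]}(1-\chi)$, which tends to $0$ with $\delta$ by continuity of $\chi$ at $1$), while $|\tilde Y_k|\le\a(1+\delta)^{1/2}$, so the contribution is at most $(1+\delta)^{1/2}\sup_{[1,1+\delta]}(1-\chi)\int_U\a\,d|Du|$. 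On the second piece your Chebyshev bound gives $|Du|$-measure tending to $0$ as $k\to\infty$ for fixed $\delta$, and on it $|\tilde Y_k|\le|\tilde Y_k+\a\sigma^u|+\a$ is integrable, so this contribution vanishes as $k\to\infty$. Sending $k\to\infty$ and then $\delta\to0$ yields $Y_k\to-\a\sigma^u$ in $L^1(|Du|)$ as claimed. The only point to add is that $U$ should be chosen bounded so that $|Du|(U)<\infty$ and $\sup_U\a<\infty$; this is harmless since the target Borel set $B$ is bounded. Your final remark, that only continuity and the lower bound~\eqref{ND} on $\a$ are needed here, is also correct and matches the paper's own comment that these preliminary facts hold for continuous weights bounded away from zero.
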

\noindent An immediate corollary of this proposition, using the characterization of the perimeter measure of Caccioppoli sets~\cite[\S4]{giusti1984minimal}, is the fact that for any Borel set $B$
\begin{align}
\P_{\a}(E,B)
&=\Hh^{n-1}\lelbow \a(\partial^*E\cap B)\label{eqn:charaperim},\\
&:=\int_{\partial^*E\cap B}\a(x)\, d\Hh^{n-1}(x).\notag
\end{align}

The integral representation formula \eqref{eqn:integraldefn} of the $\a$-variation, together with the Fleming-Rishel co-area formula for $BV$-functions imply a weighted version of the co-area formula
\begin{prop}[\!\!{\cite[Rem.~4.4]{amar1994notion}}]\label{prop:wcoarea}
If $u\in BV_{loc}(\R^n)$ and $B\subset\R^n$ is Borel, then
\[
\int_B\a(x)|Du|=\int^{+\infty}_{-\infty}\P_{\a}(\{u\geq t\},B)\,dt.
\]
\end{prop}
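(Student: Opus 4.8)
The plan is to reduce the identity to the classical Fleming-Rishel co-area formula for $BV$ functions together with the integral representation \eqref{eqn:integraldefn}, by means of a monotone approximation of the weight $\a$. Concretely, I would first recall the refined, measure-theoretic form of the co-area formula (see \cite{giusti1984minimal,evans1992measure}): for $u\in BV_{loc}(\R^n)$ the superlevel set $\{u\geq t\}$ has finite perimeter for a.e.\ $t\in\R$, the map $t\mapsto |D\chi_{\{u\geq t\}}|(A)$ is Lebesgue measurable for every Borel set $A\subset\R^n$, and, as an identity of Radon measures on $\R^n$,
\[
|Du|\;=\;\int_{-\infty}^{+\infty}|D\chi_{\{u\geq t\}}|\;dt ,
\]
that is, $|Du|(A)=\int_{-\infty}^{+\infty}|D\chi_{\{u\geq t\}}|(A)\,dt$ for every Borel $A$.

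From this I would deduce, by the standard three-step procedure, the identity
\[
\int_{\R^n}\psi\;d|Du|\;=\;\int_{-\infty}^{+\infty}\Big(\int_{\R^n}\psi\;d|D\chi_{\{u\geq t\}}|\Big)\,dt
\]
valid for every Borel function $\psi\colon\R^n\to[0,+\infty]$: indeed, for $\psi$ a Borel indicator this is the previous display verbatim; for nonnegative simple $\psi$ it follows by linearity; and for general nonnegative Borel $\psi$ it follows by monotone convergence applied to both sides simultaneously, the inner integral being a pointwise monotone limit of measurable functions of $t$, hence measurable. Taking $\psi(x)=\a(x)\chi_B(x)$, a nonnegative Borel function since $\a$ is continuous and positive, and invoking \eqref{eqn:integraldefn} once for $u$ and, inside the $t$-integral, for each $\chi_{\{u\geq t\}}$ --- using in each case that the Radon-Nikod\`ym density has unit Euclidean length a.e.\ --- we arrive at
\[
\int_B\a(x)|Du|=\int_B\a(x)\,d|Du|(x)=\int_{-\infty}^{+\infty}\int_B\a(x)\,d|D\chi_{\{u\geq t\}}|(x)\,dt=\int_{-\infty}^{+\infty}\P_{\a}(\{u\geq t\},B)\,dt ,
\]
where the last equality is \eqref{eqn:charaperim} (equivalently, the very definition of $\P_{\a}$) applied to the set $\{u\geq t\}$.

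This argument is essentially soft, so I do not expect a genuine obstacle; the only point requiring a little care is the commutation of the $t$-integral with the monotone approximation of $\a$ (Beppo Levi, using monotonicity of the inner integrand along the approximating sequence) and, concurrently, the Lebesgue measurability in $t$ of $t\mapsto\P_{\a}(\{u\geq t\},B)$ --- both immediate once the co-area formula is in hand in the measure form above. Alternatively one may avoid the measure-valued statement altogether: approximate $\a$ from below by simple functions $\a_k=\sum_j c_{j,k}\,\chi_{B_{j,k}}$, apply the scalar Fleming-Rishel formula on each Borel piece $B\cap B_{j,k}$, sum over $j$, and let $k\to\infty$ via monotone convergence.
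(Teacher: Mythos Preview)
Your argument is correct and matches the paper's own treatment: the paper does not give a detailed proof but simply remarks that the result follows from the integral representation~\eqref{eqn:integraldefn} combined with the Fleming--Rishel co-area formula, citing \cite[Rem.~4.4]{amar1994notion}. Your three-step extension of the measure-valued co-area identity to the nonnegative Borel integrand $\psi=\a\,\chi_B$ is precisely the way to make that remark rigorous.
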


Furthermore, just like for the standard perimeter measure, the following inequality holds true as well for the $\a$-perimeter functional:
\begin{prop}[\!\!{\cite[Lem.~2.2]{jerrard2018existence}}]\label{prop:ineq}
For $B\subset\R^n$ Borel and $E_1,E_2\subset\R^n$ sets of locally finite $\a$-perimeter,  
\[
\P_{\a}(E_1\cup E_2,B)+\P_{\a}(E_1\cap E_2,B)\leq \P_{\a}(E_1,B)+\P_{\a}(E_2,B).
\]
\end{prop}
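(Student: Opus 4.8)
The plan is to reduce this weighted submodularity estimate to the classical one for the ordinary perimeter, since the weight $\a$ enters only as a fixed nonnegative density integrated against Hausdorff measure. Concretely, I would first use the integral representation \eqref{eqn:integraldefn} with $u=\chi_{E_i}$, equivalently \eqref{eqn:charaperim}, to record that for any set $E$ of locally finite $\a$-perimeter (hence of locally finite perimeter, since $\a$ is locally bounded above and below, and the same then holds for $E_1\cup E_2$ and $E_1\cap E_2$) and any Borel set $B$,
\[
\P_{\a}(E,B)=\int_B\a(x)\,d|D\chi_E|(x),
\]
where $|D\chi_E|=\Hh^{n-1}\lelbow\partial^*E$ is the perimeter measure of $E$. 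It then suffices to establish the \emph{unweighted} measure inequality
\[
|D\chi_{E_1\cup E_2}|+|D\chi_{E_1\cap E_2}|\ \le\ |D\chi_{E_1}|+|D\chi_{E_2}|\qquad\text{as Radon measures on }\R^n,
\]
because integrating the nonnegative Borel function $\a$ against both sides over $B$ and using additivity and monotonicity of the integral yields at once $\P_{\a}(E_1\cup E_2,B)+\P_{\a}(E_1\cap E_2,B)\le \P_{\a}(E_1,B)+\P_{\a}(E_2,B)$.

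For the unweighted measure inequality I would invoke the classical lattice estimate $|D(u\vee v)|+|D(u\wedge v)|\le|Du|+|Dv|$ for $BV$ functions — which is an a.e.\ identity when $u,v$ are smooth and passes to the $BV$ class by mollification together with the lower semicontinuity of the total variation — applied to $u=\chi_{E_1}$ and $v=\chi_{E_2}$, using $\chi_{E_1}\vee\chi_{E_2}=\chi_{E_1\cup E_2}$ and $\chi_{E_1}\wedge\chi_{E_2}=\chi_{E_1\cap E_2}$. Equivalently, one can argue from De Giorgi's structure theorem by decomposing $\partial^*E_1\cup\partial^*E_2$, modulo $\Hh^{n-1}$-negligible sets, according to whether the complementary set has density $0$, density $1$, or admits a measure-theoretic normal at the point, and in that last case whether the two normals are parallel or antiparallel; reading off the standard identifications of $\partial^*(E_1\cup E_2)$ and $\partial^*(E_1\cap E_2)$ with the appropriate unions of those pieces and counting multiplicities gives the inequality, and in fact shows that it is saturated precisely when $\Hh^{n-1}\big(B\cap\{x\in\partial^*E_1\cap\partial^*E_2:\nu_{E_1}(x)=-\nu_{E_2}(x)\}\big)=0$.

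The only genuine work is this last, entirely classical, step of reduced-boundary bookkeeping; everything about the weight is inert, because $\P_{\a}(E,\cdot)$ is merely the perimeter measure of $E$ reweighted by the positive continuous density $\a$, so any subadditivity valid at the level of measures survives after integrating $\a$. For this reason one may also simply cite \cite[Lem.~2.2]{jerrard2018existence}, which treats the more general anisotropic integrand, or \cite{giusti1984minimal} for the model case $\a\equiv 1$.
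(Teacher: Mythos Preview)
The paper does not give its own proof of this proposition; it simply quotes the result from \cite[Lem.~2.2]{jerrard2018existence}. Your argument is correct and supplies the details that the paper omits: the key observation that $\P_{\a}(E,\cdot)$ is the ordinary perimeter measure reweighted by the positive density $\a$ via \eqref{eqn:charaperim}, so the classical measure inequality $|D\chi_{E_1\cup E_2}|+|D\chi_{E_1\cap E_2}|\le|D\chi_{E_1}|+|D\chi_{E_2}|$ (obtained either from the $BV$ lattice estimate or from the reduced-boundary decomposition you describe) transfers immediately after integrating $\a$ over $B$.
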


Essential in our development is the next lower semi-continuity property of the $\a$-perimeter, whose proof follows from a standard argument in the $BV$-theory. 
\begin{prop}\label{prop:lscaperim}
Let $U\subset\R^n$ be an open set, and $\{u_j\}\subset BV(U)$ a sequence of functions that converge in $L^1_{loc}(U)$ to a function $u$. Then $u$ has finite $\a$-variation in $U$, and moreover 
\[
\int_U\a(x)|Du|\leq \liminf_{j\to\infty}\int_U\a(x)|Du_j|.
\]
\end{prop}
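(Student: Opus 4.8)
The statement to prove is Proposition~\ref{prop:lscaperim}, lower semi-continuity of the $\a$-variation under $L^1_{loc}$ convergence.

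\medskip

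The plan is to reduce the weighted case to a standard duality argument, exploiting the integral representation~\eqref{eqn:defavar} of the $\a$-variation as a supremum over admissible vector fields. First I would fix an arbitrary test field $Y\in C^{\infty}_c(U;\R^n)$ with $|Y(x)|\leq \a(x)$ for all $x$; since $Y$ has compact support in $U$, say $\spt Y\subset K\ss U$, the convergence $u_j\to u$ in $L^1(K)$ gives
\[
\int_U u\,\div Y\,dx=\lim_{j\to\infty}\int_U u_j\,\div Y\,dx\leq \liminf_{j\to\infty}\int_U\a(x)|Du_j|,
\]
where the last inequality is just the definition~\eqref{eqn:defavar} applied to each $u_j$ with the fixed competitor $Y$ (legitimate because $Y$ is admissible for every $j$). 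Taking the supremum over all such $Y$ on the left-hand side yields $\int_U\a(x)|Du|\leq \liminf_j\int_U\a(x)|Du_j|$, which is the claimed inequality; and if the right-hand side is finite this simultaneously shows $u$ has finite $\a$-variation in $U$ (hence $u\in BV(U)$ once one also notes $u\in L^1(U)$, which follows from Fatou's lemma applied to $|u_j|\to|u|$, or may simply be assumed part of the hypothesis that the $\liminf$ is the interesting case). If the $\liminf$ is $+\infty$ there is nothing to prove.

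\medskip

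The only point requiring a little care is that~\eqref{eqn:defavar} as written takes the supremum over $Y\in C^{\infty}_c(U;\R^n)$, so the competitor $Y$ is automatically supported on a compact subset of $U$ and the passage to the limit uses only $L^1_{loc}$ convergence on that fixed compact set — this is exactly why the hypothesis is stated with $L^1_{loc}(U)$ rather than $L^1(U)$. I would spell this out so that the reader sees no global integrability of the $u_j$ is needed for the inequality itself.

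\medskip

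I do not expect any serious obstacle here: this is the verbatim analogue of the classical proof that the total variation is $L^1_{loc}$-lower semi-continuous, with $|Y|\leq 1$ replaced throughout by $|Y(x)|\leq \a(x)$, and the structure of~\eqref{eqn:defavar} as a supremum of continuous (indeed linear) functionals of $u$ makes lower semi-continuity automatic. The mild bookkeeping issue — confirming $u\in BV(U)$ and not merely $u\in BV_{loc}(U)$ when the $\liminf$ is finite — is handled by combining the $\a$-variation bound with the non-degeneracy~\eqref{ND} (which gives $\int_U|Du|\leq \alpha^{-1}\int_U\a|Du|<\infty$) and Fatou for the $L^1$ norm of $u$ on compact exhaustions together with a uniform bound, or by reading "finite $\a$-variation in $U$" in the weak sense the proposition actually needs downstream; I would state explicitly which reading is intended so the citation to the standard $BV$ argument is unambiguous.
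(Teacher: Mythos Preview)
Your argument is correct and is precisely the ``standard argument in the $BV$-theory'' the paper invokes without spelling out: the paper gives no proof of Proposition~\ref{prop:lscaperim}, merely citing the classical lower semi-continuity of total variation, and your duality argument via~\eqref{eqn:defavar} is exactly that proof with the constraint $|Y|\leq 1$ replaced by $|Y(x)|\leq \a(x)$. Your caveats about the $\liminf=+\infty$ case and the $BV$ versus $BV_{loc}$ bookkeeping are appropriate and more careful than the paper itself.
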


Of particular importance to us are sets of finite $\a$-perimeter whose boundaries minimize the weighted area $\Hh^{n-1}\lelbow\a$, cf.~\eqref{eqn:defelbow}, which will be referred from now on as the $\a$-area measure. The notion of an $\a$-area minimizing set we use throughout, is the following

\begin{defn}
If $E$ is a set of locally finite $\a$-perimeter and $U$ is a bounded open set,
we say that $\partial E$ is $\a$-area minimizing in $U$ if
\begin{equation}\label{eqn:minaperim}
\P_{\a}(E,U)=\inf\{\P_{\a}(F,U):\,E\Delta F\subset\subset U\}.
\end{equation}
We say $\partial E$ is locally $\a$-area minimizing if~\eqref{eqn:minaperim} holds true for every choice of a bounded open subset $U$ of $\R^n$.
\end{defn}

We continue our preliminary discussion by highlighting some key aspects about the general strategy that we are going to adopt in the present article. A {\em continuous} function $u_{\star}$ of least $\a $-variation $\int_{\Omega}\a(x)|Du_{\star}|$ subject to Dirichlet data $g$, in the sense of~\eqref{eqn:simpleLGP}, will be constructed by first identifying the candidate $u_{\star}$ ``level set by level set." Inspired by the co-area formula Proposition~\ref{prop:wcoarea}, each superlevel set of $u_{\star}$ will correspond (up to a set of $\Hh^n$-measure zero) to a set of minimal $\a$-perimeter in $\Omega$ and meeting the boundary conditions on $\partial\Omega$ imposed by $g$. Roughly speaking, given a value of $t\in g(\partial\Omega)$ we are going to solve the problem of finding a set $E_t$ that minimizes the $\a$-perimeter $\P_{\a}(\cdot,\Omega)$ while meeting $\{g\geq t\}$ on $\partial\Omega$. Once the collection $\{E_t:t\in g(\partial\Omega)\}$ has been built, the continuity of $u_{\star}$ will be a consequence of a key ingredient, known as a strict maximum principle for $\a$-area minimizing sets; a separation $\partial E_t\cap\partial E_s\cap\Omega=\emptyset$ will be then obtained for any $s<t$. Such a maximum principle is local in nature, so we will address it in two different cases depending on whether the sets $\partial E_t$, $\partial E_s$ can be written as a $C^2$-hypersurfaces around a contact point.

As the discussion in the preceding paragraph suggests, the regularity of $\partial E_t$ will play a crucial role in our development just like in the standard (homogeneous) theory. Given an $n$-rectifiable set $E\subset\R^n$, we say that $\partial E$ is regular at a point $x\in\partial E$ if there is a ball $B\subset\R^n$ centered at $x$ such that $\partial E\cap B$ has a representation as a $C^2$-hypersurface, written in coordinates around the approximate tangent space at $x$ (see~\cite{simon1983lectures}). Let us write $\reg(\partial E):=\{x\in\partial E: \partial E\text{ is regular at }x \}$ and $\sing(\partial E):=\partial E\setminus\reg(\partial E)$. The building blocks to the regularity theory in the standard homogeneous setting, which play a similar role in our inhomogeneous setting,  are the tangent cones. More precisely, if $\partial E$ is $\a$-area minimizing in a bounded open set $U\subset \R^n$, then for each $x\in \partial E$ and each sequence $\lambda_j\to 0^+$ there exist a subsequence $\{\lambda_{j'}\}\subset \{\lambda_j\}$ and a Borel set $F$ of locally finite $\a$-perimeter so that $\partial F$ is $\a$-area minimizing in $U$, and if we denote the translation plus homothety $E_j:=\{y\in\R^n: x+\lambda_j(y-x)\in E\}$, it also follows that $\chi_{E_{j'}}\to \chi_{F}$ in $L^1_{loc}(\R^n)$. Here $C=\partial F$ is called \emph{tangent cone} to $E$ at $x$ (not unique a priori). Although it is not immediate, $C$ is a union of half-lines issuing from $x$ and thus is a cone. If $\bar{C}$ is contained in any hyperplane of $\R^n$ supported at $x$ then the tangent cone of $\partial E$ at $x$ is unique, and $\partial E$ is regular at $x$ (see~\cite[Thm.~I.1.2]{schoen1977regularity}). In other words, there exists $r>0$ so that $B_r(x)\cap \partial E$ is a $C^2$-hypersurface. See Remark~\ref{rem:regularity} in \S\ref{sec:maxpple} for additional comments. 

Another crucial tool in the proof of Theorem~\ref{thm:main1} is a singularity estimate due to Schoen, Simon and Almgren~\cite[Thm.~I.3.1, Cor.~I.3.2]{schoen1977regularity}. Although this result is quite general, we apply it in the context of sets whose boundary minimize the $\a$-area. It states that for a set $E$ with $\partial E$ minimizing the $\a$-area in a bounded open set $U$, there holds
\begin{equation}\label{eqn:regularitySSA}
\left\{\begin{array}{ll} \Hh^{n-3}(\sing(\partial E)\cap U)<\infty, & \text{if } n\geq 4\\[0.1cm] \sing(\partial E)\cap U=\emptyset, & \text{if }n\leq 3\end{array}.\right.
\end{equation}
It follows that $\reg(\partial E)$ is dense in $\partial E$. Estimate~\eqref{eqn:regularitySSA} rests upon a regularity assumption on the weight function $\a\in C^2$ (see (9) in~\cite{schoen1977regularity} for the precise condition in generality).

%%%%%%%%%%%%%%%%%%%%%%%%%%%%%%%%%%%%%%%%%
\section{A strict maximum principle for $\a$-perimeter minimizing sets}\label{sec:maxpple}
%%%%%%%%%%%%%%%%%%%%%%%%%%%%%%%%%%%%%%%%%

The aim of the current section is to present a general maximum principle for $\a$-perimeter minimizing sets, and to provide a full proof of it.  It states that when two sets are nested and touch at a point, and have $\a$-minimizing perimeter, then they should locally coincide.
\smallskip

More precisely, this result states that 

\begin{thm}[Maximum principle for $\a$-area minimizing sets]\label{thm:setmaxpple}
Let $E_1\subset E_2$ be $n$-rectifiable subsets in $\R^n$ where both $\partial E_1$, $\partial E_2$ minimize the $\a$-area in a bounded open set $U\subset \R^n$. If $x\in \partial E_1\cap\partial E_2\cap U$, then $\partial E_1$ and $\partial E_2$ agree in some neighborhood of $x$.
\end{thm}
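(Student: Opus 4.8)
The plan is to split the argument according to whether the common point $x$ is a regular point of both boundaries or a singular point of at least one of them, exactly along the lines indicated in the introduction. First I would reduce to the local picture: since $\partial E_1$, $\partial E_2$ are both $\a$-area minimizing in $U$ and $E_1\subset E_2$, we may work inside a small ball $B\ss U$ centered at $x$. The easy (but foundational) case is when $x\in\reg(\partial E_1)\cap\reg(\partial E_2)$. There both boundaries are $C^2$-hypersurfaces near $x$, each satisfying the Euler--Lagrange equation for the $\a$-area functional $\Hh^{n-1}\lelbow\a$, i.e. a prescribed-mean-curvature type equation $H_{\partial E_i}=\nabla\log\a\cdot\nu$ on the respective surface. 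Writing both as graphs $u_1\le u_2$ over the common approximate tangent plane at $x$ (they are tangent at $x$ since they touch and are nested), the difference $w=u_2-u_1\ge 0$ satisfies a linear second-order elliptic equation with no zeroth-order term of unfavorable sign (the $C^2$ regularity of $\a$ is what makes the coefficients continuous), and $w(x)=0$; the classical Hopf strong maximum principle forces $w\equiv 0$ near $x$, so $\partial E_1$ and $\partial E_2$ coincide near $x$. This is the content I would isolate as Proposition~\ref{prop:strongmaxpple}.

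For the general case, where $x$ may lie in $\sing(\partial E_1)\cup\sing(\partial E_2)$, the strategy is to transport the problem to a Riemannian setting and invoke Simon's strict maximum principle from~\cite{simon1987strict}. Endow a neighborhood of $x$ with the conformal metric $\g=\a^{2/(n-1)}\delta_{ij}$; by the mass-equivalence identification (Theorem~\ref{thm:massequiv}) the $\a$-area of $\partial E_i$ equals the mass of the integral current $\partial\curr{E_i}$ in $(\R^n,\g)$, so $\partial E_1$ and $\partial E_2$ are area-minimizing codimension-one boundaries in this Riemannian manifold, nested and with a common point $x$. Simon's theorem then applies directly: two such minimizing hypersurfaces that lie on one side of each other and touch must coincide in a neighborhood of the contact point. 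I would quote this as Corollary~\ref{cor:simonsetmaxpple} and conclude. One point to check carefully is that the hypotheses of Simon's result—which are stated for stationary integral varifolds or minimizing currents satisfying a suitable curvature/regularity bound—are met here: the $C^2$ bound on $\a$ guarantees the ambient metric is $C^2$ and that the first variation of the $\a$-area has bounded generalized mean curvature, which is exactly the structural hypothesis Simon requires.

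The main obstacle I expect is not the regular case but the bookkeeping needed to legitimately move between the three descriptions of the same objects: the measure-theoretic set $E_i$ (with the closure convention~\eqref{eqn:convention}), its reduced boundary as an $(n-1)$-rectifiable set carrying the measure $\Hh^{n-1}\lelbow\a$, and the integral current $\partial\curr{E_i}$ with its mass in the metric $\g$. In particular one must verify that ``$\a$-area minimizing'' in the sense of~\eqref{eqn:minaperim} really does pass to ``mass-minimizing among integral currents in $(\R^n,\g)$'', that the nesting $E_1\subset E_2$ translates into the correct one-sidedness of the currents (consistent orientations), and that the notion of ``agree in a neighborhood of $x$'' is the same in all three pictures—this is where $\overline{\partial^*E}=\partial E$ and the density of $\reg(\partial E)$ in $\partial E$ from~\eqref{eqn:regularitySSA} get used. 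Once Theorem~\ref{thm:massequiv} is in hand this is essentially routine, but it is the step most prone to hidden gaps, so I would state the correspondence as a separate lemma before proving the theorem.
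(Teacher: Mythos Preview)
Your overall strategy---handle regular contact by the graphical strong maximum principle (Proposition~\ref{prop:strongmaxpple}), and handle singular contact by passing to the conformal metric $\g=\a^{2/(n-1)}\delta_{ij}$ via Theorem~\ref{thm:massequiv} and invoking Simon---is the paper's strategy. But your plan to ``apply Corollary~\ref{cor:simonsetmaxpple} directly'' in the singular case skips a genuine step: that corollary requires $\spt T_1\cap U$ and $\spt T_2\cap U$ to be \emph{connected}, and there is no a~priori reason why $\partial E_j\cap B_r(x)$ should be connected. The paper handles this by decomposing each $(\partial^*E_j)\cap B_r(x)$ into connected components $S_{j,m}$, using the lower mass bound $\M_{B_\rho,\g}(\curr{S_{j,m}})\geq c\rho^n$ to show only finitely many components meet any smaller ball, and then applying Simon's Theorem~\ref{thm:simonmaxpple} (not the corollary) to pairs of components of the \emph{same} boundary to conclude that $x$ lies in the closure of exactly one component $C_j$ for each $j$. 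Only then does the paper split into cases---and the split is not yours (regular versus singular $x$) but rather whether $C_1\cap C_2\neq\emptyset$ or $C_1\cap C_2=\emptyset$; in the first of these, Proposition~\ref{prop:strongmaxpple} is applied at a point of $C_1\cap C_2$, which is regular for both even if $x$ itself is not.

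In the case $C_1\cap C_2=\emptyset$ there is a further subtlety you do not anticipate: once you pass to components $C_j\subset\partial^*E_j$, the original nesting $E_1\subset E_2$ no longer hands you nested sets bounded by $C_1$ and $C_2$. The paper invokes Federer's decomposition theorem to write $\curr{C_j}=(\partial\curr{F_j})\lelbow B_r(x)$ for open connected $F_j$, and then runs a short topological argument (taken from~\cite{simon1987strict}) using $\Hh^{n-2}(\bar C_1\cap\bar C_2)=0$ and the connectedness of the $C_j$ to arrange $F_1\subset F_2$; only after this does Corollary~\ref{cor:simonsetmaxpple} finally apply. This is more than the ``bookkeeping between three descriptions'' you flag; it is the technical core of the singular case. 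One minor point on the regular case: the linearized $\a$-minimal surface operator carries a zeroth-order coefficient $d(x)=-\int_0^1\partial_{ss}\a(x',u^t)\sqrt{1+|\nabla u^t|^2}\,dt$ with no sign control, so the paper uses the weak Harnack inequality \cite[Thm.~8.18]{gilbarg1998elliptic} rather than the classical Hopf principle you cite.
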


This is an adaptation of the analogous Theorem 2.2 in~\cite{simon1987strict} to the particular setting of an inhomogeneous isotropic Riemannian metric. The contribution of such result, when compared to the classical strong maximum principle for minimal surfaces, is that it includes the case of hypersurfaces that may contain singularities, around which the hypersurface {\em cannot} be written as the graph of a function over the tangent plane based at the singularity point. In particular, the contact point $x$ mentioned above could potentially be a singular point for either of $E_1$ or $E_2$. This problematic situation in the context of minimal surfaces has been resolved by Leon Simon in a celebrated result, known as the {\em strict maximum principle for mass minimizing currents}, of fundamental importance in  geometric measure theory.  It deals with a more general situation than the one mentioned in the above theorem, where the main object of study are currents (cf.~\S3.2). This result states the following

\begin{thm}[\!\!\!{\cite[Thm.~1]{simon1987strict}}]\label{thm:simonmaxpple}
Let $U$ be an open set of a smooth $n$-dimensional oriented Riemannian manifold $N$.
Suppose $T_1$ and $T_2$ are integer multiplicity currents with $\partial T_1=0=\partial T_2$ in $U$,~$T_1$ and $T_2$ are mass-minimizing in $U$ and $\reg T_1\cap \reg T_2\cap U=\emptyset$. Then, 
\[
\spt T_1\cap \spt T_2\cap U=\emptyset.
\]
\end{thm}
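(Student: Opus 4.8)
The strategy for proving this (Simon's) theorem is to argue by contradiction: assuming the two supports meet, localise around a contact point, reduce to a one-sided configuration of two boundaryless mass-minimizing hypersurfaces, and then propagate a Hopf-type strong maximum principle along the open, dense regular part of one of them. Suppose then that $z_0\in\spt T_1\cap\spt T_2\cap U$. Since the currents have codimension one, I would invoke the interior regularity theory for codimension-one mass-minimizing currents in a smooth Riemannian manifold: $\reg T_i$ is open and dense in $\spt T_i$, near each of its points $\spt T_i$ is a smooth embedded minimal hypersurface of $N$, and $\sing T_i:=\spt T_i\setminus\reg T_i$ has finite $\Hh^{n-3}$-measure (empty for $n\le 3$, and in fact Hausdorff dimension $\le n-8$ for a smooth Riemannian metric), so it has codimension at least two inside the $(n-1)$-dimensional hypersurface; this parallels~\eqref{eqn:regularitySSA}. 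I would also use the monotonicity formula, the resulting lower density bound $\Theta(T_i,\cdot)\ge 1$ on $\spt T_i$, its upper semicontinuity, and the fact that every blow-up limit at a point of $\spt T_i$ is a mass-minimizing hypercone. Passing to geodesic normal coordinates at $z_0$ and rescaling, I may assume $N=\R^n$, $z_0=0$, with the metric $g$ as $C^2$-close to Euclidean as desired on a small ball $B_\rho$.

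Next I would reduce to a one-sided local picture. Since $\partial T_i=0$ in $B_\rho$, each $T_i$ bounds there: $T_i=\partial\curr{E_i}$ with $E_i\subset B_\rho$ a set whose boundary minimizes $g$-perimeter in $B_\rho$. A transversal crossing of $\partial E_1$ and $\partial E_2$ at $0$ would make $0$ a regular point of both, contradicting $\reg T_1\cap\reg T_2\cap U=\emptyset$; so after replacing $(E_1,E_2)$ by $(E_1\cap E_2,\,E_1\cup E_2)$ — still minimizing by submodularity of the perimeter, cf.~Proposition~\ref{prop:ineq}, and still exhibiting a contact at $0$, as a look at the blow-ups at $0$ confirms — I may assume that near $0$ one has $E_1\subseteq E_2$, $0\in\partial E_1\cap\partial E_2$, with the slab $E_2\setminus E_1$ lying between the two hypersurfaces. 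It then suffices to show $E_1$ and $E_2$ coincide near $0$: since $\reg T_i$ is dense and nonempty, that would place a point in $\reg T_1\cap\reg T_2\cap U$, the sought contradiction.

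On $\reg T_1$ I would measure the gap to the other surface by $v(x):=\dist_g(x,\spt T_2)\ge 0$, so $v(0)=0$, and $v$ is smooth and positive on the part of $\reg T_1$ that stays away from $\spt T_2$ and faces a regular portion of $T_2$. Subtracting the two minimal-surface equations and Taylor expanding, with the error controlled by the small ambient curvature of $(\R^n,g)$, shows that $v$ satisfies a linear uniformly elliptic differential inequality $\mathcal{L}v\le Cv$ on $\{v>0\}\subset\reg T_1$, where $\mathcal{L}$ is an operator intrinsic to the minimal hypersurface $\reg T_1$. Were the surfaces smooth everywhere, the connectedness of $\spt T_1$ near $0$ together with the strong maximum principle and the Hopf boundary-point lemma would force $v\equiv 0$, i.e.\ $\spt T_1=\spt T_2$ locally, and we would be done.

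The crux — the step I expect to be the hardest — is that $v$ is a priori neither globally defined nor smooth (uncontrolled as $x$ nears $\sing T_1$, and the comparison degenerates where $\spt T_2$ nears $\sing T_2$), so the maximum principle must be propagated through $\sing T_1\cup\sing T_2$. This is the heart of Simon's argument, and it exploits that the singular sets have codimension $\ge 2$ within the hypersurfaces, hence (being of finite $\Hh^{n-3}$-measure) zero $W^{1,2}$-capacity and thus removable for the relevant elliptic problem. Concretely I would: (i) establish interior and Harnack-type estimates for $\mathcal{L}v\le Cv$ on $\reg T_1\setminus\sing T_2$ with constants independent of the distance to the singular sets, using the area and density bounds from minimality; (ii) bound the growth of $v$ near $\sing T_1\cup\sing T_2$, ruling out blow-up by monotonicity and minimality; (iii) run a capacitary covering argument showing that a nonnegative supersolution vanishing at an interior point of the connected regular set must vanish in a neighbourhood despite the removed singular set; and (iv) dispose of the case in which $0$ is singular for both $T_i$ by blowing up to nested mass-minimizing hypercones $C_1\subseteq C_2$ touching at the origin and iterating a Federer-type dimension reduction until the ambient dimension drops to $\le 7$, where every minimizing hypercone is a hyperplane, forcing $C_1=C_2$ to be regular at $0$. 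Assembling (i)–(iv) gives $\spt T_1=\spt T_2$ near $z_0$, hence $\reg T_1\cap\reg T_2\cap U\neq\emptyset$, contradicting the hypothesis and proving the theorem.
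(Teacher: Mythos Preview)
The paper does not prove this theorem. It is quoted from Simon~\cite{simon1987strict} as an external black box, and the paper's own contribution in \S\ref{sec:maxpple} is to deduce Theorem~\ref{thm:setmaxpple} from it (via Corollary~\ref{cor:simonsetmaxpple}) after identifying $\a$-perimeter with mass in the conformal metric (Theorem~\ref{thm:massequiv}). So there is no proof in the paper to compare your proposal against.

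As an outline of Simon's original argument, your sketch has the right architecture --- one-sided reduction, a linearised elliptic inequality for a gap function on the regular set, and propagation of the strong maximum principle across the singular sets by exploiting their small Hausdorff dimension (hence zero $W^{1,2}$-capacity). Two points deserve caution. First, your step~(iv) is not Simon's route and, as written, is a gap: reducing by blow-up and Federer dimension-reduction until $n\le 7$ only tells you that every \emph{tangent cone} pair coincides, which does not by itself yield local coincidence of the original hypersurfaces; Simon's Harnack/capacity argument works directly in every dimension and handles the singular--singular contact without any dimension descent. Second, the function $v=\dist_g(\cdot,\spt T_2)$ is not smooth where the nearest-point projection lands on $\sing T_2$, so the inequality $\mathcal{L}v\le Cv$ is not available as stated; Simon works instead with graphical height differences over local reference planes and derives Harnack-type estimates with constants uniform up to the singular set. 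Your cut-and-paste reduction via $(E_1\cap E_2,\,E_1\cup E_2)$ is also not how Simon arranges the one-sided picture --- he uses the decomposition theorem~\cite[\S4.5.17]{federer1969geometric} directly, much as the paper does in its proof of Theorem~\ref{thm:setmaxpple}.
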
 
The main content of this theorem lies in the fact that $\sing T_1\cap\sing T_2\cap U=\emptyset$. Indeed, a previous work in~\cite{miranda1967sulle} and also in~\cite[\S37.10]{simon1983lectures} establishes $\sing T_1\cap \reg T_2\cap U=\emptyset$. The latter fact was subsequently proved in~\cite{solomon1989strong}, even without the minimizing hypothesis.
\smallskip

A direct consequence of Theorem~\ref{thm:simonmaxpple} is the next corollary for oriented boundaries of least area
\begin{cor}[\!\!{\cite[Cor.~1]{simon1987strict}}]\label{cor:simonsetmaxpple}
Adopting the notation of Theorem~\ref{thm:simonmaxpple}, let $T_1=(\partial\curr{E_1})\lelbow U$ and $T_2=(\partial\curr{E_2})\lelbow U$ be mass-minimizing currents in $U$, with $E_1\cap U\subset E_2\cap U$ and with $\spt T_1\cap U$ and $\spt T_2\cap U$ connected. Then either $T_1=T_2$, or~$\spt T_1\cap\spt T_2\cap U=\emptyset$. 
\end{cor}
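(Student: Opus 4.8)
The plan is to deduce the corollary directly from Theorem~\ref{thm:simonmaxpple}, reading that statement as a dichotomy. Since $E_1$ and $E_2$ have locally finite perimeter in $U$, the currents $T_1=(\partial\curr{E_1})\lelbow U$ and $T_2=(\partial\curr{E_2})\lelbow U$ satisfy $\partial T_1=0=\partial T_2$ in $U$ and are mass-minimizing there by hypothesis, so Theorem~\ref{thm:simonmaxpple} applies verbatim. Its contrapositive reads: either $\spt T_1\cap\spt T_2\cap U=\emptyset$ — the second alternative in the conclusion — or there is a point $x_0\in\reg T_1\cap\reg T_2\cap U$. It thus remains to prove that in the latter case $T_1=T_2$ on $U$.

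\textbf{Step 1 (local coincidence at a common regular point).} Fix $x_0\in\reg T_1\cap\reg T_2\cap U$. By definition of $\reg$ there is a ball $B=B_\rho(x_0)\ss U$ on which $\partial E_1\cap B$ and $\partial E_2\cap B$ are embedded $C^2$ hypersurfaces, each minimizing the $\a$-area, hence each stationary — in fact minimizing — for the area functional of the conformal metric $\g=\a^{2/(n-1)}\delta_{ij}\,dx^idx^j$; consequently each solves the associated minimal surface equation, a quasilinear uniformly elliptic PDE whose coefficients inherit the $C^1$ regularity of $\g$. Shrinking $B$, write both hypersurfaces as graphs $x''=v_i(x')$ over the common approximate tangent plane at $x_0$; after fixing orientations so that $E_i$ lies below its graph, the inclusion $E_1\cap U\subset E_2\cap U$ forces $v_1\le v_2$, with equality at $x_0$. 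The difference $v_1-v_2$ then satisfies a homogeneous linear uniformly elliptic differential inequality and attains the interior maximum value $0$ at $x_0$, so E.~Hopf's classical strong maximum principle gives $v_1\equiv v_2$ on a smaller ball; since the orientations agree as well, $T_1\lelbow B'=T_2\lelbow B'$ for some ball $B'\ni x_0$.

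\textbf{Step 2 (propagation by connectedness).} Let $\Ww$ be the union of all open balls $B\ss U$ on which $T_1\lelbow B=T_2\lelbow B$; by Step~1, $\Ww\ne\emptyset$, $\Ww$ is open, and $\spt T_1\cap\Ww=\spt T_2\cap\Ww$. Because $\spt T_1\cap U$ is connected, it suffices to show $\spt T_1\cap\Ww$ is relatively closed in $\spt T_1\cap U$: this forces $\spt T_1\cap U\subset\Ww$, and symmetrically $\spt T_2\cap U\subset\Ww$, whence $T_1=T_2$ on $U$ — the first alternative. So let $y\in\spt T_1\cap U$ be a limit of points $y_k\in\spt T_1\cap\Ww$; since each $y_k\in\spt T_2$, we get $y\in\spt T_1\cap\spt T_2\cap U$. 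Working in a small ball $B_r(y)\ss U$, Theorem~\ref{thm:simonmaxpple} (contrapositive, the support intersection being nonempty there) produces a point of $\reg T_1\cap\reg T_2\cap B_r(y)$, and Step~1 gives $T_1=T_2$ on a nonempty open subset of $B_r(y)$. One then upgrades this to $T_1\lelbow B_r(y)=T_2\lelbow B_r(y)$ by a unique-continuation argument: on their regular parts the two hypersurfaces solve the same elliptic equation and agree on an open set, hence agree on the entire connected regular set; the density of $\reg T_i$ in $\spt T_i$, together with the fact recorded after Theorem~\ref{thm:simonmaxpple} that $\sing T_i$ cannot meet $\reg T_{3-i}$, then propagates the coincidence of the supports — and with the orientations fixed as in Step~1, of the oriented currents — across $B_r(y)$. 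Thus $y\in\Ww$, completing Step~2 and the proof.

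The main obstacle is precisely the last step of the propagation: since the supports may carry singularities accumulating at the limit point $y$, one cannot conclude by the smooth maximum principle alone and must combine Theorem~\ref{thm:simonmaxpple} with a unique-continuation statement for $\a$-area minimizers. The other delicate point is bookkeeping of orientations throughout — ensuring that contact from the side forced by $E_1\subset E_2$ yields $T_1=T_2$ rather than $T_1=-T_2$ — which is what makes the final identity an equality of currents and not merely of supports.
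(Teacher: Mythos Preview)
The paper does not supply its own proof of Corollary~\ref{cor:simonsetmaxpple}; it is quoted verbatim from Simon~\cite{simon1987strict}, so there is no argument in the paper to compare against. On its own merits your Step~1 is correct and is essentially Proposition~\ref{prop:strongmaxpple}. The genuine difficulty is Step~2, and there your argument has a gap.

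You define $\Ww$ as the union of balls on which $T_1=T_2$ and try to show $\spt T_1\cap\Ww$ is relatively closed in $\spt T_1\cap U$. Given a limit point $y$, you apply Theorem~\ref{thm:simonmaxpple} on $B_r(y)$ to produce a common regular point $z$, get coincidence on a small ball $B'\ni z$, and then claim a ``unique-continuation argument'' upgrades this to $T_1\lelbow B_r(y)=T_2\lelbow B_r(y)$. But unique continuation only propagates coincidence through the \emph{connected component} of $\reg T_1\cap B_r(y)$ containing $z$; nothing you wrote rules out other components of $\reg T_1\cap B_r(y)$, and $y$ may be a singular point lying in the closure of a component you have not reached. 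Density of $\reg T_i$ in $\spt T_i$ together with $\sing T_i\cap\reg T_{3-i}=\emptyset$ does not by itself bridge between distinct local components, so the conclusion $y\in\Ww$ is unjustified. You yourself flag this as ``the main obstacle,'' but the sketch given does not close it.

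The clean fix is to abandon the open--closed argument on $\spt T_1$ and run it instead on $\reg T_1\cap U$. The point you are missing is that $\reg T_1\cap U$ is \emph{connected}: since $\spt T_1\cap U$ is connected by hypothesis and $\sing T_1$ has Hausdorff dimension at most $n-8$ for mass-minimizing integral currents (hence codimension $\geq 7$ inside the $(n-1)$-dimensional support), removing it does not disconnect. Now set $C:=\{x\in\reg T_1\cap U:\,x\in\spt T_2\}$. This set is nonempty by the contrapositive of Theorem~\ref{thm:simonmaxpple}, closed in $\reg T_1\cap U$ trivially, and open there because any $x\in C$ lies in $\reg T_2$ (the Solomon--White/Miranda fact you cite, which is valid here since $E_1\subset E_2$ forces one-sided lying), whence Step~1 gives local coincidence. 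Connectedness yields $C=\reg T_1\cap U$, so $\reg T_1\subset\spt T_2$, and by density $\spt T_1\cap U\subset\spt T_2\cap U$; by symmetry the supports agree, and with the orientation bookkeeping from $E_1\subset E_2$ (which you handled correctly) one gets $T_1=T_2$.
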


It is ultimately this tool the one that allows us to push our continuity result to all dimensions $n\geq 2$, and in particular to the case $n\geq 8$ where $\a$-area minimizing sets cease to be smooth in general. In this situation, PDE techniques such as the strong maximum principle are no longer available (not even in the weak form). For details, the reader can compare to the continuity result in \cite{jerrard2018existence}.
\smallskip

The proof of Theorem~\ref{thm:setmaxpple} is rather technical and it uses different ingredients in geometric analysis, the main one being Corollary~\ref{cor:simonsetmaxpple}. The purpose of this section is to present a collection of results, along with their proofs, that will give a proof of Theorem~\ref{thm:setmaxpple}.

%%%%%%%%%%%%%%%%%%%%%%%%%%%%%%
\subsection{A strict maximum principle for hypersurfaces}
%%%%%%%%%%%%%%%%%%%%%%%%%%%%%%

Our first goal is to establish a strict maximum principle for hypersurfaces minimizing the $\a$-area functional, in the case where the intersection point is regular for both hypersurfaces. In fact, we prove a slightly stronger result; see Remark~\ref{rem:scopestrongmaxpple}. In this situation we have PDE techniques available, and the next result corresponds to a weak version of the Hopf maximum principle for quasilinear equations in divergence form. Before introducing this result, we need to discuss some preliminary concepts.

Given a Caccioppoli set $E$ and $x_0\in\partial^*E$ it is known that $\partial E$ can be locally represented as the graph of a non-negative $C^1$-function $u$ over a tangent plane at $x_0$, cf. \cite{giusti1984minimal}. If we write $(x',s)\in\R^{n-1}\times\R$ for the coordinates of $x\in\R^n$ and if $B'_r(x')$ denotes a ball in $\R^{n-1}$, then the latter fact shows that (up to isometries) there is a choice of $r=r(x_0)>0$ small and $u\in C^1(B'_r(x'_0);[0,\infty))$ in such a way that  
\[
\{(x',s):\, x'\in B'_r(x'_0),\; 0\leq s\leq u(x')\}\subset E,
\]
 and
\[
\partial E\cap B_r(x_0)=\{(x',u(x')):\,x'\in B'_r(x'_0)\}.
\]
In particular, the characterization~\eqref{eqn:charaperim} of the $\a$-perimeter measure together with the area formula (cf.~\cite[Thm.~4.1]{evans1992measure}) allows us to compute in local coordinates 
\begin{align}
\P_{\a}(E,B_r(x_0))
&=\int_{B'_r(x'_0)}\a(x',u(x'))\sqrt{1+|\nabla u(x')|^2}\,d\Hh^{n-1}(x') \label{eqn:aLocalPer}\\
&=:I_{\a}(u).\notag
\end{align}
Here and henceforth, we denote $\nabla:=\nabla_{x'}$ for the gradient in $\R^{n-1}$, whenever appropriate. Computing the first variation of $I_{\a}(u)$ in the direction of $\varphi$, $\delta I_{\a}(u)[\varphi]$, we obtain
\begin{equation}
\Mm_{\a} u(\varphi):=\int_{B'_r(x'_0)}
\left\{\frac{\a(x',u)\nabla u}{\sqrt{1+|\nabla u|^2}}\cdot\nabla\varphi
+\partial_s\a(x',u)\sqrt{1+|\nabla u|^2}\,\varphi\right\} dx'=0.
\end{equation}
This operator will be called the $\a$-minimal surface operator of $u$, acting on any test function $\varphi$ with compact support in $B'_{r}(x'_0)$. The last observation motivates the following notions

\begin{defn}\label{def:SolaMSE}
Let $W\subset \R^{n-1}$ be an open set.  A function $u\in C^1(W)$ is called a classical solution to the inhomogeneous $\a$-minimal surface equation {\em (or $\a$-MSE)} in $W$, if
\[
-\div_{x'}\left(\frac{\a(x',u)\nabla u}{\sqrt{1+|\nabla u|^2}}\right)+\partial_s\a(x',u)\sqrt{1+|\nabla u|^2} =0,\;\; \;\forall x'\in W.
\]
Also, $u\in C^1(W)$ is called a weak subsolution {\em (}weak supersolution{\em)} of the inhomogeneous $\a$-MSE in $W$ if
\[
\Mm_{\a}u(\varphi)
:=\int_{W}\left\{\frac{\a(x',u)\nabla u}{\sqrt{1+|\nabla u|^2}}\cdot\nabla\varphi
+\partial_s\a(x',u)\sqrt{1+|\nabla u|^2}\,\varphi\right\} dx'\leq 0 \quad (\geq 0),
\]
whenever $\varphi\in C^{1}_c(W)$ with $\varphi\geq 0$.
\end{defn}
We now present a maximum principle, in the favorable situation where the contact point is regular for the two touching hypersurfaces which minimize the $\a$-area functional.
\begin{prop}[Weak form of strict maximum principle for hypersurfaces]\label{prop:strongmaxpple}
Let $W$ be an open set in $\R^{n-1}$ and let $u_0, u_1\in C^1(W)$ be a weak supersolution and a weak subsolution of the inhomogeneous $\a$-MSE in $W$, respectively. Suppose that $u_1\geq u_0$ in $W$, while $u_1(x'_0)=u_0(x'_0)$ for some $x'_0\in W$. Then $u_1=u_0$ in some open neighborhood of $x'_0$ in $W$. 
\end{prop}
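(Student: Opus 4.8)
The plan is to reduce Proposition~\ref{prop:strongmaxpple} to the classical strong maximum principle for second-order uniformly elliptic operators in divergence form, applied to the difference $w:=u_1-u_0$. First I would fix a ball $B'_\rho(x'_0)\ss W$ on which both $u_0,u_1$ and their gradients are bounded (using $u_i\in C^1$), so that the coefficients built from them below are bounded; on such a ball the weight $\a$ and its $s$-derivative are bounded too, by $\a\in C^2(\bar\Omega)$ and~\eqref{ND}. Writing $A(x',s,p):=\a(x',s)p/\sqrt{1+|p|^2}$ and $b(x',s,p):=\partial_s\a(x',s)\sqrt{1+|p|^2}$, the weak sub-/supersolution inequalities give, for every $\varphi\in C^1_c(B'_\rho)$ with $\varphi\ge0$,
\[
\int_{B'_\rho}\Big\{\big(A(x',u_1,\nabla u_1)-A(x',u_0,\nabla u_0)\big)\cdot\nabla\varphi+\big(b(x',u_1,\nabla u_1)-b(x',u_0,\nabla u_0)\big)\varphi\Big\}\,dx'\le 0.
\]
The standard device is to write each difference as an integral of the derivative of the integrand along the segment joining $(u_0,\nabla u_0)$ to $(u_1,\nabla u_1)$: with $u_\theta:=(1-\theta)u_0+\theta u_1$, one gets $A(x',u_1,\nabla u_1)-A(x',u_0,\nabla u_0)=\big(\int_0^1 D_pA(x',u_\theta,\nabla u_\theta)\,d\theta\big)\nabla w+\big(\int_0^1 D_sA\,d\theta\big)w$, and similarly for $b$. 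This exhibits $w$ as a nonnegative weak supersolution of a linear operator
\[
Lw:=-\div\big(\Aa(x')\nabla w+\mathbf{c}(x')w\big)+\mathbf{d}(x')\cdot\nabla w+e(x')w\le 0,
\]
whose principal part $\Aa(x')=\int_0^1 D_pA(x',u_\theta,\nabla u_\theta)\,d\theta$ is uniformly elliptic on $B'_\rho$: indeed $D_pA$ at a point is $\a$ times the Hessian-type matrix $(1+|p|^2)^{-1/2}\big(I-(p\otimes p)/(1+|p|^2)\big)$, which has eigenvalues between $\a(1+|p|^2)^{-3/2}$ and $\a(1+|p|^2)^{-1/2}$, hence is bounded above and below on our ball thanks to $\a\ge\alpha>0$ and the $C^1$ bound on the gradients; the lower-order coefficients $\mathbf{c},\mathbf{d},e$ are bounded measurable on $B'_\rho$ for the same reasons. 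Since $w\ge0$ in $B'_\rho$, $w$ is $C^1$ (hence continuous), $w(x'_0)=0$, and $w$ is a weak supersolution of the uniformly elliptic operator $L$ with bounded coefficients, the weak Harnack inequality / strong maximum principle of the Gilbarg--Trudinger type (Theorem~8.19 there) forces $w\equiv0$ in a neighborhood of $x'_0$; this is exactly $u_1=u_0$ near $x'_0$, as claimed.

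The main obstacle I anticipate is bookkeeping rather than conceptual: one must be careful that the linearized operator really has bounded and uniformly elliptic coefficients, which uses in an essential way that $u_0,u_1\in C^1$ (so $\nabla u_\theta$ is bounded along the whole homotopy) and that $\a\in C^2$ with $\a\ge\alpha>0$ (so $D_pA$, $D_sA$, $D_pb$, $D_sb$ are bounded and the ellipticity constant stays positive). A minor subtlety is that the zeroth-order coefficient $e$ and the divergence-form coefficient $\mathbf{c}$ coming from $\partial_s$-terms need not have a sign, so one should invoke the form of the strong maximum principle that allows a bounded zeroth-order coefficient of arbitrary sign for a nonnegative supersolution vanishing at an interior point (again available in Gilbarg--Trudinger); alternatively one absorbs the $e\,w$ term by noting $w\ge0$ and comparing with the operator $L-e^{-}w$. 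I would also remark (cf.~Remark~\ref{rem:scopestrongmaxpple}) that the hypothesis that $u_0,u_1$ minimize can be dropped entirely here — only the one-sided differential inequalities are used — which is why the statement is phrased for a sub/supersolution pair; the minimizing property of $\partial E_1,\partial E_2$ will only be needed later, in the singular case handled via Corollary~\ref{cor:simonsetmaxpple}.
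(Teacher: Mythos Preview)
Your proposal is correct and follows essentially the same route as the paper: linearize the difference $\Mm_{\a}u_1(\varphi)-\Mm_{\a}u_0(\varphi)$ along the segment $u_\theta=(1-\theta)u_0+\theta u_1$, read off a divergence-form operator $L$ whose principal part $\int_0^1 D_pA\,d\theta$ is uniformly elliptic on a ball (using $u_i\in C^1$ and $\a\ge\alpha>0$) with bounded lower-order coefficients (using $\a\in C^2$), and then apply the weak Harnack inequality to the nonnegative supersolution $w=u_1-u_0$ with $w(x'_0)=0$. The only cosmetic difference is that the paper cites \cite[Thm.~8.18]{gilbarg1998elliptic} directly and writes out the $L^q$ Harnack estimate, whereas you phrase the conclusion via the strong maximum principle of \cite[Thm.~8.19]{gilbarg1998elliptic}; your remark that no sign condition on the zeroth-order coefficient is needed for a nonnegative supersolution is exactly the point, and the paper's argument relies on the same fact.
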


\begin{proof}[Proof Proposition~\ref{prop:strongmaxpple}]
Let us write $w(x'):=u_1(x')-u_0(x')$ and $u^t(x'):=u_0(x')+tw(x')$ for $t\in [0,1]$ and $x'\in W$. Let us observe that $u^0=u_0$ and $u^1=u_1$, in view of this notation. It will be convenient for computations to rewrite the $\a$-minimal surface operator acting on $\varphi\in C^{1}_c(W)$ with $\varphi\geq 0$, as
\[
\Mm_{\a}u(\varphi)=
\int_W(\f(x',u,\nabla u)\cdot\nabla\varphi+f(x',u,\nabla u)\varphi)dx',
\]
where for $x',p'\in\R^{n-1}$ and $s\in\R$  we let
\[
\f(x',s,p'):=\a(x',s)\frac{p'}{\sqrt{1+|p'|^2}},\quad f(x',s,p'):=\partial_s\a(x',s)\sqrt{1+|p'|^2}.
\]
Using the linearity of  $\varphi\mapsto \Mm_{\a}u(\varphi)$, we compute the difference $\Mm_{\a}u^1(\varphi)-\Mm_{\a}u^0(\varphi)$ by means of the chain rule
\[
\f(x',u^1,\nabla u^1)-\f(x',u^0,\nabla u^0)
=\int^1_0\frac{d}{dt}\f(x',u^t,\nabla u^t)\,dt
=\{a^{ij}(x')\partial_jw+b^i(x')w\}_{i=1,\ldots, n-1},
\]
with coefficients 
\begin{align*}
a^{ij}(x')
&:=
\int^1_0\left[D_{p'}\f(x',u^t,\nabla u^t)\right]_{ij}dt
=\int^1_0\a(x',u^t)
\frac{(1+|\nabla u^t|^2)\delta_{ij}-\partial_{i}u^t\partial_{j} u^t}{(1+|\nabla u^t|^2)^{3/2}}\,dt,
\\
b^i(x')&:=\int^1_0\left[\partial_s\f(x',u^t,\nabla u^t)\right]_{i}dt
=\int^1_0\frac{\partial_s\a(x',u^t)\partial_i u^t}{\sqrt{1+|\nabla u^t|^2}}\,dt.
\end{align*}
Similarly, one has
\[
f(x',u^1,\nabla u^1)-f(x',u^0,\nabla u^0)
=\int^1_0\frac{d}{dt}f(x',u^t,\nabla u^t)\,dt
=-(c^j(x')\partial_jw+d(x') w),
\]
where
\begin{align*}
c^j(x')&:=-\int^1_0\left[D_{p'}f(x',u^t,\nabla u^t)\right]_{j}dt
=\int^1_0-\partial_{s}\a(x',u^t)\frac{\partial_{j}u^t}{\sqrt{1+|\nabla u^t|^2}}\,dt,\\
d(x')&:=-\int^1_0\partial_sf(x',u^t,\nabla u^t)dt=\int^1_0-\partial_{ss}\a(x',u^t)\sqrt{1+|\nabla u^t|^2}\,dt.
\end{align*}
Since $u^1$ and $u^0$ are weak supersolution and subsolution of $\a$-MSE, respectively, we get
\begin{align*}
0&\leq \Mm_{\a}u^1(\varphi)-\Mm_{\a}u^0(\varphi)\\
&=\int_W\left\{
(\f(x',u^1,\nabla u^1)-\f(x',u^0,\nabla u^0))\cdot\nabla\varphi+
(f(x',u^1,\nabla u^1)-f(x',u^0,\nabla u^0))\varphi\right\}dx'\\
&= \int_W \{ (a^{ij}(x')\partial_jw+b^i(x')w)\partial_i\varphi-(c^j(x')\partial_jw+d(x')w)\varphi\}dx'\\
&=:\Ll(w,\varphi).
\end{align*}
Thus, $w$ is a weak supersolution of $Lw=0$, where $L$ corresponds to the linear operator in divergence form 
\[
Lw:=\partial_{i}\Big(a^{ij}(x')\partial_{j}w+b^i(x')w\Big)+c^j(x')\partial_jw+d(x')w.
\]
We verify that $L$ is uniformly elliptic in $W$ and that it has bounded coefficients. Indeed, for each neighborhood $V\ss W$ of $x'_0$ there exists $K>0$ so that $\sup_{t\in[0,1]}\|u^t\|_{C^1(\bar{V})}\leq K$, since $u_1,u_0\in C^1(W)$. In particular, $\a\in C^2$ implies that $\exists K_{\a}>0$ so that $\|\a\|_{C^2(\bar{W}\times [-K,K])}\leq K_{\a}$.  The assumed non-degeneracy \eqref{ND} yields the uniform ellipticity: for $\xi'\in\R^{n-1}$ and $x'\in V$,  $a^{ij}(x')\xi'_i\xi'_j\geq \alpha|\xi'|^2/(1+K^2)^{3/2}$. In addition, there exist constants $\Lambda(K_{\a},K),\nu(K_{\a},K)$ so 
\[
\sup_{x'\in \bar{V}}\sum_{i,j}|a^{ij}(x')|^2\leq \Lambda \;\;\text{ and }\;\;
\sup_{x'\in \bar{V}}\sum_i|b^i(x')|^2+\sum_j|c^j(x')|^2+|d(x')|\leq \nu^2.
\]
By invoking the \emph{weak Harnack inequality}\cite[Thm.~8.18]{gilbarg1998elliptic} on the non-negative supersolution $w$ of $Lw=0$, we get the existence of $C>0$ depending on $K, \alpha,\Lambda, \nu, n$, in such a way that 
\[
\e^{-n/q}_0\biggl(\int_{B'_{2\e_0}(x'_0)}|w|^q\biggr)^{1/q}\leq C\inf_{B'_{\e_0}(x'_0)}w=0,
\]
for every $q\in (1,n/(n-2))$ and $\e_0$ with $B'_{4\e_0}(x'_0)\subset V$. Therefore $w\equiv 0$ on $B'_{\e_0}(x'_0)$.
\end{proof}

\begin{rem}\label{rem:scopestrongmaxpple}
The strict maximum principle in Proposition~\ref{prop:strongmaxpple} remains valid for hypersurfaces which may not minimize the $\a$-area  necessarily. Furthermore, these hypersurfaces need not be critical for the $\a$-area functional~\eqref{eqn:aLocalPer}, but rather supercritical and subcritical around the contact point, in the sense that the associated functions $u_0$ and $u_1$ in Proposition~\ref{prop:strongmaxpple} are supersolution and subsolution of the $\a$-MSE, respectively.
\end{rem}

%%%%%%%%%%%%%%%%%%%%%%%%%%%%%%
\subsection{Duality between weighted variations and mass of currents}
%%%%%%%%%%%%%%%%%%%%%%%%%%%%%%

The main tool in the proof of our desired Theorem~\ref{thm:setmaxpple} consists of a broader kind of maximum principle, for mass-minimizing currents established in~\cite{simon1987strict}. With the purpose of applying this maximum principle to our setting (cf.~Corollary~\ref{cor:simonsetmaxpple}) it will be then necessary to make the identification between the {\em $\a$-perimeter measure} of a Caccioppoli set $E$, with the notion of {\em mass} of the co-dimension one rectifiable current $\partial\curr{E}$, in the sense of Federer~\cite{federer1969geometric}. A reader well versed in geometric measure theory may consider this identification rather clear, however, for completeness and readability of the article we give a quick overview of some needed concepts in geometric measure theory.

Throughout, $M$ denotes an oriented complete smooth Riemannian manifold of dimension $n$ endowed with a $C^2$-metric $\g$ (cf.~Remark~\ref{rem:regularity}), $U$ denotes a non-empty open subset of $M$, and $l\in\{1,\ldots,n\}$. The $\alpha$-dimensional Hausdorff measure induced by the geodesic distance $d_{\g}$ in $M$ will be written $\Hh^{\alpha}_{\g}$. The space of smooth differential $l$-forms in $M$ is denoted by $\Omega^l(M)$. In particular, we are interested in those forms that have compact support in $U$,
\[
\Dd^{l}(U):=\{\omega\in\Omega^{l}(M):\,\spt \omega\subset U\}.
\]
This set is equipped with the standard topology of convergence on compact subsets of $M$.
An $l$-dimensional current in $U$ is defined as a continuous linear functional over $\Dd^{l}(U)$. The set of all $l$-dimensional currents over $U$ will denoted by $\Dd_l(U)$. Following \cite[\S26]{simon1983lectures} we introduce in the Riemannian setting a particularly useful class of currents, namely, the class integer multiplicity rectifiable currents (cf.~\cite{federer1969geometric}).
\smallskip

\begin{defn}[Integer multiplicity rectifiable current]\label{def:imc}
Let $U$ be an open set of $(M,{\g})$. If $T\in \Dd_l(U)$, we say $T$ is an integer multiplicity rectifiable current if it can be expressed as
\begin{equation}\label{eqn:integermultcurrent}
T(\omega)=\int_N\langle w,\xi\rangle_x\,\theta(x)\,d\Hh^l_{\g}(x)\;\;\text{ for }\; \omega\in\Dd^l(U),
\end{equation}
where $N$ is an $\Hh^l_{\g}$-measurable countably $l$-rectifiable subset of $U$, $\theta$ is an integer valued function in $L^1_{loc}(N,\Hh^l_{\g})$, $\xi$ is a $\Hh^l_{\g}$-measurable, oriented unit $l$-vector field on $N$, and $\langle\cdot ,\cdot\rangle_x$ corresponds to the dual pairing between the spaces $\Lambda^l(T_xN)$ of $l$-covectors and $\Lambda_l(T_xN)$ of $l$-vectors, in the approximate tangent space $T_xN$.
\end{defn}

In the case $T$ is as in \eqref{eqn:integermultcurrent}, we write $T=:\ttau(N,\theta,\xi)$ and call $\theta$ the multiplicity of $T$, $\xi$ the orientation of $T$.  A central role in the maximum principle in~\cite{simon1987strict} is played by currents induced by sets E, obtained by integration over $E$ of smooth $l$-forms in $M$ with compact support. That is,
\begin{defn}[Current induced by a set]\label{def1}
Let $E$ be a countably rectifiable subset of $M$ which is $\Hh^l_{\g}$-measurable for some $l\in\{1,\ldots,n\}$. We define the $l$-dimensional current $\curr{E}$, acting on $\eta\in\Dd^{l}(M)$ via
\begin{equation}\label{eqn:1.1}
\curr{E}(\eta):=\int_E\eta,
\end{equation}
where the integration is generally defined in the sense of~{\em \cite[\S11.1, \S11.7]{simon1983lectures}}.
\end{defn}
We point out that~\eqref{eqn:1.1} yields a sensible definition of $\curr{E}$ even when the ambient space $M$ is not equipped with a metric $\g$. Nonetheless, in the Riemannian setting it is convenient from the point of view of geometric measure theory, to rewrite this expression in terms of a measure arising from the metric of $M$. Thus, we have an alternative characterization for currents arising from appropriate sets, as given in

\begin{prop}\label{prop:equiv}
For $E$ as in Definition~\ref{def1}, we have that $\curr{E}$ is a multiplicity one rectifiable current, $\curr{E}=\ttau(E,1,\xi)$, where $\xi$ is the oriented unit $l$-vector field on $E$ inherited from $M$. In other words,
\[
\curr{E}(\eta)=\int_E\langle\eta,\xi\rangle_x\, d\Hh^l_{\g}(x)\;\;\text{ for }\;\eta\in\Dd^l(M).
\]
\end{prop}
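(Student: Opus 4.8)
The plan is to reduce the statement to the general integration-theoretic description of $\curr{E}$ recalled in Definition~\ref{def1}, and then to unwind the Riemannian structure. First I would observe that $E$ is countably $l$-rectifiable and $\Hh^l_{\g}$-measurable, so by the structure theory of rectifiable sets (see \cite[\S11]{simon1983lectures}) the approximate tangent space $T_xE$ exists for $\Hh^l_{\g}$-a.e.\ $x\in E$, and the orientation of $M$ together with a consistent choice of normals along $E$ (or, equivalently, an orientation of $TE$) determines an $\Hh^l_{\g}$-measurable oriented unit simple $l$-vector field $\xi(x)\in\Lambda_l(T_xE)$, where ``unit'' is meant with respect to the inner product on $\Lambda_l(T_xM)$ induced by $\g$. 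This furnishes the candidate data $(E,1,\xi)$ for the representation $\ttau(E,1,\xi)$.

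Next I would verify that the integral in Definition~\ref{def1}, $\curr{E}(\eta)=\int_E\eta$, coincides with $\int_E\langle\eta,\xi\rangle_x\,d\Hh^l_{\g}(x)$ for every $\eta\in\Dd^l(M)$. This is the content of the local area formula on rectifiable sets adapted to a Riemannian ambient: away from an $\Hh^l_{\g}$-null set one may cover $\Hh^l_{\g}\lelbow E$ by countably many $C^1$ pieces, parametrize each by a Lipschitz chart, and apply the change-of-variables formula for the pullback of a differential form. Concretely, if $F\colon A\subset\R^l\to M$ is such a chart with $F(A)$ a piece of $E$, then $\int_{F(A)}\eta=\int_A F^*\eta$, and on the other side $F^*\eta = \langle \eta\circ F,\, F_*(e_1\wedge\cdots\wedge e_l)\rangle\, dy$ while the Jacobian factor $J^{\g}_F$ relating $dy$ to $d\Hh^l_{\g}$ is exactly the $\g$-norm $|F_*(e_1\wedge\cdots\wedge e_l)|_{\g}$; dividing and multiplying by this factor converts $F_*(e_1\wedge\cdots\wedge e_l)$ into the unit $l$-vector $\xi$ and $dy$ into $d\Hh^l_{\g}$. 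Summing over the pieces (justified by dominated convergence, using $\eta\in\Dd^l$ compactly supported and $\Hh^l_{\g}(\spt\eta\cap E)<\infty$ locally, which follows from $E$ being rectifiable) yields the stated formula. By Definition~\ref{def:imc} this exhibits $\curr{E}$ as an integer multiplicity rectifiable current with underlying set $E$, multiplicity $\theta\equiv 1$, and orientation $\xi$, i.e.\ $\curr{E}=\ttau(E,1,\xi)$.

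The main obstacle, such as it is, is bookkeeping rather than mathematics: one must be careful that all the geometric objects — the Hausdorff measure $\Hh^l_{\g}$, the norm on $\Lambda_l(T_xM)$, the dual pairing $\langle\cdot,\cdot\rangle_x$, and the Jacobian — are taken consistently with respect to the single metric $\g$, so that the normalizing factors cancel exactly; and one should note that multiplicity one is forced because $E$ is an honest set (each point is covered once by the parametrization, up to an $\Hh^l_{\g}$-null overlap), as opposed to a weighted union. The measurability of $\xi$ follows from the measurability of $x\mapsto T_xE$ on each $C^1$ piece and the countability of the cover. No minimality or special structure of $\g$ beyond $C^2$-regularity is used here; the proposition is purely a restatement of the definition of $\curr{E}$ in Riemannian coordinates, and this is precisely why it is recorded now, to be combined later (in Theorem~\ref{thm:massequiv}) with the identity $|\xi|_{\g}=1$ to match the $\a$-perimeter measure with the mass of $\partial\curr{E}$.
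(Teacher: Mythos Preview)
The paper does not supply a proof of this proposition; it is stated as a standard fact, with the definition of $\int_E\eta$ in~\eqref{eqn:1.1} deferred to \cite[\S11.1,\,\S11.7]{simon1983lectures}. Your proposal is correct and is precisely the argument underlying those references: cover $E$ up to an $\Hh^l_{\g}$-null set by countably many $C^1$ pieces, pull back $\eta$ through Lipschitz charts, and use the area formula so that the Riemannian Jacobian $|F_*(e_1\wedge\cdots\wedge e_l)|_{\g}$ simultaneously normalizes the pushed-forward $l$-vector to the unit $\xi$ and converts Lebesgue measure to $\Hh^l_{\g}$. There is nothing further to compare, since you have simply written out what the paper takes for granted.
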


We now continue by recalling the notion of {\em boundary} of currents. Motivated by the classical Stoke's theorem, we are led by~\eqref{eqn:1.1} to quite generally define the boundary $\partial T$ of an $l$-current $T\in\Dd_l(M)$ by $\partial T(\omega):=T(d\omega)$ for $\omega\in\Dd^{l-1}(M)$. Finally we review the concept of {\em mass of a current}, in the sense of~\cite[\S26.4]{simon1983lectures}. Again motivated by the special case $T=\curr{E}$ as in~\eqref{eqn:1.1}, we define the mass of the current $T$ with respect to the metric $\g$, denoted $\M_{\g}(T)$, for $T\in\Dd_l(M)$ by
\begin{equation}\label{eqn:defnmass}
\M_{\g}(T):=\sup\left\{T(\omega):\,\omega\in\Dd^l(M),\;\; \sup_{x\in M}|\omega_x|_{\g}\leq 1\right\}
\end{equation}
(so that $\M_{\g}(T)=\Hh^n_{\g}(E)$ if $T=\curr{E}$.) More generally, for any open $U \subset M$ we define the mass of the current $T$ restricted to $U$ with respect to the metric $\g$, by
\begin{equation}\label{eqn:defnmass2}
\M_{U,\g}(T):=\sup\left\{T(\omega):\,\omega\in\Dd^l(M),\; \spt\omega\subset U,\;\; \sup_{x\in U}|\omega_x|_{\g}\leq 1\right\},
\end{equation}
where in~\eqref{eqn:defnmass}-\eqref{eqn:defnmass2} we adopt the convention where the norm of an $l$-covector $\omega_x$, $|\omega_x|_{\g}$, is defined as the dual norm to $\g$, that acts on the space of $l$-vectors.
\smallskip
   
We are ready to discuss the main result of this subsection, in which we identify the $\a$-perimeter of a set $E\subset \R^n$ with the mass of the current $\curr{E}$ in the ambient manifold $M=\R^n$, with respect to a certain metric $\g$ depending on the weight function $\a$. This will allow us to invoke the maximum principle in~\cite{simon1987strict} to our development.   
 
\begin{thm}\label{thm:massequiv}
Consider $\R^n$ endowed with the metric $\g=\a^{\sigma}\bar{\g}$, conformal to the standard Euclidean $\bar{\g}_x=\delta_{ij}dx^idx^j$ with $\sigma\neq 0$. If $E$ is a Caccioppoli set that is furthermore an $n$-rectifiable Borel set, then there holds
\[
\M_{U,\a^{\sigma}\bar{\g}}(\partial\curr{E})=\int_{U}\a^{(n-1)\sigma/2}(x)\, d|D\chi_E|(x),
\]
for any connected open set $U\subset \R^n$. In particular, the choice $\sigma=2/(n-1)$ yields
\[
\M_{U,\a^{2/(n-1)}\bar{\g}}(\partial\curr{E})=\P_{\a}(E,U).
\]
\end{thm}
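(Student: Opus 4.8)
The plan is to reduce the statement about the mass of the boundary current $\partial\curr{E}$ to a statement about the mass of the current $\curr{E_n}$ associated to the reduced boundary, interpreted as an $(n-1)$-dimensional rectifiable set. First I would recall from Definition~\ref{def1} and Proposition~\ref{prop:equiv} that since $E$ is $n$-rectifiable and Caccioppoli, the current $\curr{E}$ is a multiplicity-one $n$-current on $\R^n$, and by the standard structure theory for sets of finite perimeter (the Gauss--Green theorem in the language of currents, cf.~\cite[\S14]{simon1983lectures}) its boundary $\partial\curr{E}$ is an integer multiplicity rectifiable $(n-1)$-current carried by the reduced boundary $\partial^*E$, with multiplicity one and orientation given by the measure-theoretic normal $\nu_E$. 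Concretely $\partial\curr{E} = \ttau(\partial^*E, 1, \star\nu_E)$, where $\star\nu_E$ is the simple $(n-1)$-vector dual to $\nu_E$. This identification is metric-independent: the underlying rectifiable set and its orientation do not see $\g$.

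Next I would compute the mass from the definition \eqref{eqn:defnmass2}, being careful about which metric enters where. By Proposition~\ref{prop:equiv} applied on the ambient manifold $M=\R^n$ with metric $\g = \a^\sigma\bar\g$, we have
\[
\M_{U,\g}(\partial\curr{E}) = \int_{\partial^*E\cap U} 1 \, d\Hh^{n-1}_{\g}(x).
\]
So the entire content reduces to the change-of-variables identity relating the $(n-1)$-dimensional Hausdorff measure $\Hh^{n-1}_{\g}$ of the conformal metric $\g=\a^\sigma\bar\g$ to the Euclidean $\Hh^{n-1}$. For a conformal metric $\g = \lambda^2\bar\g$ (here $\lambda^2 = \a^\sigma$, i.e.\ $\lambda = \a^{\sigma/2}$), lengths scale by $\lambda$, so on any $k$-rectifiable set the induced $k$-dimensional Hausdorff measures are related by $d\Hh^k_{\g} = \lambda^k\, d\Hh^k$; this follows from the area formula applied to the identity map viewed as a map between the two metric structures, since the Jacobian of a conformal factor $\lambda$ restricted to a $k$-plane is $\lambda^k$. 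Taking $k = n-1$ gives $d\Hh^{n-1}_{\g} = \a^{(n-1)\sigma/2}\, d\Hh^{n-1}$. Substituting and using the characterization $\P(E,\cdot) = \Hh^{n-1}\lelbow(\partial^*E)$, equivalently $d|D\chi_E| = d\Hh^{n-1}\lelbow\partial^*E$, yields
\[
\M_{U,\a^\sigma\bar\g}(\partial\curr{E}) = \int_{\partial^*E\cap U}\a^{(n-1)\sigma/2}(x)\, d\Hh^{n-1}(x) = \int_U \a^{(n-1)\sigma/2}(x)\, d|D\chi_E|(x),
\]
which is the first claimed identity. The specialization $\sigma = 2/(n-1)$ makes the exponent equal to $1$, so the right-hand side becomes $\int_U \a(x)\, d|D\chi_E|(x) = \P_\a(E,U)$ by \eqref{eqn:charaperim}, giving the second identity.

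The one point requiring genuine care — and what I'd flag as the main obstacle — is the matching of \emph{norms on covectors} in the mass definition \eqref{eqn:defnmass2} with the \emph{volume scaling} on vectors, so that no spurious conformal factor is lost or double-counted. In \eqref{eqn:defnmass2} the supremum is over $\omega$ with $\sup_x|\omega_x|_\g\le 1$ in the \emph{dual} norm to $\g$ acting on $(n-1)$-vectors, while in Proposition~\ref{prop:equiv} the value $\curr{\partial^*E}(\omega) = \int\langle\omega,\xi\rangle_x\,d\Hh^{n-1}_\g$ uses the $\g$-orthonormal simple unit $(n-1)$-vector $\xi$; one must check that $\sup\{\langle\omega_x,\xi_x\rangle : |\omega_x|_\g\le 1\} = |\xi_x|_\g = 1$ pointwise, which is exactly the duality $(\Lambda^{n-1})^* = \Lambda_{n-1}$ compatibly extended from $\g$, and then that the resulting integrand is identically $1$ against $d\Hh^{n-1}_\g$. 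Once this bookkeeping is pinned down, the only analytic input is the area formula for the conformal rescaling, which is elementary. I would also note in passing that the hypothesis that $E$ is $n$-rectifiable (beyond merely Caccioppoli) and that $U$ is connected are used only to ensure $\curr{E}$ and $\partial\curr{E}$ fall cleanly into the integer-multiplicity rectifiable framework of \cite{simon1983lectures} with no lower-dimensional pathologies, so that $\partial\curr{E}$ is carried precisely by $\partial^*E$ with multiplicity one.
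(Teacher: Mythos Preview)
Your argument is correct but follows a genuinely different route from the paper's. The paper works entirely from the supremum definition \eqref{eqn:defnmass2} of mass: it represents an arbitrary $(n-1)$-form as $\omega=*X^{\flat}$ via the Hodge star and musical isomorphism, computes $\partial\curr{E}(\omega)=\curr{E}(d*X^{\flat})=\int_E\div_{\g}X\,d\Hh^n_{\g}$, rewrites this in Euclidean coordinates as $\int\chi_E\,\div(\sqrt{\det\g}\,X)\,dx$, substitutes $Y=\sqrt{\det\g}\,X$, tracks the norm identity $|X|_{\g}=\a^{(1-n)\sigma/2}|Y|_{\bar\g}$, and recognizes the resulting supremum as precisely the dual-norm definition \eqref{eqn:defavar} of the weighted variation. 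You instead bypass the supremum altogether by invoking the structure theorem identifying $\partial\curr{E}$ with the multiplicity-one current on $\partial^*E$, so that its $\g$-mass is simply $\Hh^{n-1}_{\g}(\partial^*E\cap U)$, and then apply the conformal scaling $d\Hh^{n-1}_{\g}=\a^{(n-1)\sigma/2}\,d\Hh^{n-1}$. Your route is shorter and makes the exponent $(n-1)\sigma/2$ transparent as the $(n-1)$-dimensional Jacobian of the conformal factor; the paper's route is more self-contained (it does not need the Gauss--Green identification of $\partial\curr{E}$ as a rectifiable current) and has the virtue of linking the mass directly back to the variational definition \eqref{eqn:defavar}, which is the form actually used elsewhere in the article. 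The bookkeeping you flag about matching the $\g$-dual norm on covectors with the $\g$-unit orienting vector is exactly the point that needs care in your approach, and it is handled implicitly in the paper's approach by the isometry properties of $*$ and $\flat$.
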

\vskip 1pt
\begin{proof}[Proof of Theorem~\ref{thm:massequiv}]
Clearly $[\partial/\partial x^1,\ldots, \partial/\partial x^n]$ is a global orientation of the ambient manifold $\R^n$, where $(x^1,\ldots, x^n)$ denote the standard coordinates. The countably $n$-rectifiable set $E$ is $\Hh^n_{\g}$-measurable since it is Borel in $\R^n$, hence it induces the $n$-current $\curr{E}:=\ttau(E,1,\xi)$ as in Proposition~\ref{prop:equiv} (with $l=n$), where $\xi$ is the unit standard orientation in $E$ induced from $\R^n$. 

According to the definition of mass in~\eqref{eqn:defnmass2} and of boundary current, $\M_{U,\g}(\partial\curr{E})$ is the supremum of $\curr{E}(d\omega)$ for any $\omega\in\Dd^{n-1}(\R^n)$ with $\spt\omega\subset U$ and $\sup_{x\in U}|\omega(x)|_{\g}\leq  1$. On the other hand, it is known that the Hodge star operator $*:\Omega^1(U)\to\Omega^{n-1}(U)$ is a linear isometry. Thus, $\omega\in\Omega^{n-1}(U)$ can be written $\omega=*\Upsilon$ for $\Upsilon\in \Omega^1(U)$, and furthermore 1-forms admit a vector field representation by means of the musical isomorphism: $\Upsilon=X^{\flat}$, which is an isometry by construction of the flat operator $\flat: C^{\infty}(U;\R^n)\to \Omega^1(U)$. Let us now remark that $\spt X\subset\subset U$ iff $\spt(*X^{\flat})\subset\subset U$. Indeed, we can write explicitly in coordinates
\begin{align*}
*X^{\flat}=*\left(\sum^n_{j=1}(\sum^n_{i=1}g_{ij}X^i)\frac{\partial}{\partial x_j}\right)
=\sum^n_{j=1}(\sum^n_{i=1}g_{ij}X^i)(-1)^{j-1}dx^1\wedge\ldots\wedge\widehat{dx^j}\wedge\ldots\wedge dx^n,
\end{align*}
where $\widehat{dx^j}$ is used to denote that the term $dx^j$ is missing in the product above. 
In light of these observations it follows that $|(*X^{\flat})(x)|_{\g}=|X^{\flat}(x)|_{\g}=|X(x)|_{\g}$, and therefore
\begin{equation}\label{eqn:intermass}
\M_{U,\g}(\partial\curr{E})=\sup\left\{\curr{E}(d*X^{\flat}): X\in C^{\infty}(U;\R^n),\; \spt X\subset U,\; \sup_{x\in U}|X(x)|_{\g}\leq  1\right\}.
\end{equation}
In addition, it is known that $*X^{\flat}=dV_{\g}(X,\cdot)$, so now the definition of divergence in Riemannian manifolds implies that $d*X^{\flat}=\div_{\g}X\, dV_{\g}$. Also, the volume form $dV_{\g}$ satisfies that $\langle dV_{\g},\xi\rangle\equiv 1$ for any unit $n$-vector field $\xi$. Consequently,
\[
\curr{E}(d*X^{\flat})=\int_E\langle\div_{\g}X\,dV_{\g},\xi\rangle_x\,d\Hh^n_{\g}(x)
=\int_{E}\div_{\g}X\,d\Hh^n_{\g}.
\]
Here $d\Hh^n_{\g}$ denotes the Riemannian measure. As $E$ is countably $n$-rectifiable, in the study the last integral we can assume with no loss of generality that $E$ is an $n$-dimensional embedded $C^1$-submanifold of $\R^n$, cf.~\cite[\S11.1]{simon1983lectures}. In this case, let us note the pullback metric in $E$ is again $\g$, and that in the Euclidean ambient spaces there is no need to use local parametrizations and partitions of unity to compute the integration with respect to the Riemannian measure in $E$. From all of this, and the expression in coordinates of the divergence operator of $X=\sum_jX^j(\partial/\partial x^j)\in C^{\infty}_c(U;\R^n)$, we simply obtain
\begin{align*}
\curr{E}(d*X^{\flat})
&=\int_{E}\sqrt{\det \g}\;\div_{\g}X\, dx\\
&=\int_{E}\sum^n_{j=1}\frac{\partial}{\partial x^j}(\sqrt{\det \g}\, X^j)\, dx\displaybreak[3]\\
&=\int_{\R^n}\chi_E\, \div(\sqrt{\det \g}\; X)\,dx,
\end{align*}
where $\div(\cdot)=\div_{\bar{\g}}(\cdot)$ stands for the divergence in Euclidean coordinates. Defining now the vector field $Y:=\sqrt{\det \g}\, X\in C^{\infty}_c(U;\R^n)$, we get from $\g=\a^{\sigma}\bar{\g}$ that $\sqrt{\det \g(x)}=\a^{n\sigma/2}(x)$, and consequently
\begin{align*}
|X|_{\g}=\sqrt{\g_x(X,X)}=\a^{\sigma/2}|X|_{\bar{\g}}=\a^{(1-n)\sigma/2}(\a^{n\sigma/2}|X|_{\bar{\g}})
=\a^{(1-n)\sigma/2}|Y|_{\bar{\g}}.
\end{align*}
The computation of the mass $\M_{U,\g}(\partial\curr{E})$ in~\eqref{eqn:intermass} has been shown to be equivalent to
\begin{equation}
\sup\left\{\int_{\R^n}\chi_E\,\div Y\, dx:\, Y\in C^{\infty}_c(U;\R^n),\; \sup_{x\in U} \a^{-(n-1)\sigma/2}(x)|Y(x)|_{\bar{\g}}\leq  1\right\}.
\label{eqn:5}
\end{equation}
Finally, we observe that $\a^{-(n-1)\sigma/2}(x)|Y(x)|_{\bar{\g}}=\varphi^0_{\a}(x,Y(x))$, where $\varphi^0_{\a}(x,\cdot)$ is the polar (or dual) of the norm $\varphi_{\a}(x,\cdot):=\a^{(n-1)\sigma/2}(x)|\cdot|_{\bar{\g}}$. This fact along with definition~\eqref{eqn:defavar} allows us to identify the supremum in~\eqref{eqn:5} as an inhomogeneous variation of $\chi_E$ in $U\subset\R^n$:
\[
\int_U\a^{(n-1)\sigma/2}(x)|D\chi_E| ,
\]
thus completing the proof of Theorem~\ref{thm:massequiv}.
\end{proof}

\begin{rem}\label{rem:regularity}
The singularity estimates~\eqref{eqn:regularitySSA} mentioned in \S\ref{sec:notation} are a consequence of the regularity theory in~{\em\cite{schoen1977regularity,schoen1982new}} for co-dimension one rectifiable currents minimizing parametric elliptic functionals, with the choice of $\mathbf{F}(T)=\int_{\R^n}F(x,\nu^T(x))d\|T\|$ and $F(x,p)=\a(x)|p|$ {\em(}following their notation{\em)}, provided $\a\in C^2(\bar{\Omega})$ is a weight function bounded away from zero. However, if we focus simply in the question of existence of tangent cones at every point on the support of a mass-minimizing boundary current $T=\partial\curr{E}$ in the Riemannian setting, we can just invoke the identification in Theorem~\ref{thm:massequiv} above. Indeed, this also follows from applying what are now standard techniques {\em(}cf.~Federer~{\em\cite{federer1969geometric}} or Simon~{\em\cite{simon1983lectures})} in the regularity theory of integral mass-minimizing currents in the Euclidean space $(\R^n,\bar{\g})$ with $\bar{\g}(x)=\delta_{ij}dx^idx^j$. Given $x_0\in \spt T$ on a Riemannian manifold $(M,\g)$ we let $x=(x^1,\ldots, x^n)\in\R^n$ be normal coordinates for $M$ near $x_0$ with the origin $x=0$ corresponding to $x_0$ and with $T_{x_0}M$ identified with $\R^n$ via these coordinates {\em(}here $\g\in C^2$ is required{\em)}. Thus, in these coordinates $\g(x)=\g_{ij}(x)dx^idx^j$ with $\g_{ij}(0)=\delta_{ij}$ and $\partial \g_{ij}/\partial x^k(0)=0$ for all $i,j,k$. We can take homotheties $T_{\lambda}:=(\lambda^{-1})_{\#}T$ for $\lambda>0$ in terms of such coordinates, and $T_{\lambda}$ is mass-minimizing relative to the metric $\g_{ij}(\lambda x)dx^idx^j$. In light of the approximation for $\g$, Euclidean density estimates around $x=0$ translate into estimates for the mass ratio of the area in metric balls around $x_0$. A monotonicity formula for the density function is then available, which combined with the compactness theorem for locally rectifiable currents {\em(}in the Riemannian setting{\em)}~\emph{\cite[Thm.~5.5]{morgan2009geometric}} show that for any $\lambda_j\to 0^+$ the sequence $T_{\lambda_j}$ has a subsequence converging weakly to some current $C$ {\em(}i.e. the tangent cone at $x_0${\em)}, invariant under large homotheties. A detailed account of this argument is given in~\emph{\cite[pp.~5044]{morgan2003regularity}}.  A further recollection of results for currents in the Riemannian setting can be found in~\emph{\cite{hardt1985area}}. 
\end{rem}

%%%%%%%%%%%%%%%%%%%%%%%%%%%%%%
\subsection{A proof of Theorem~\ref{thm:setmaxpple}}
%%%%%%%%%%%%%%%%%%%%%%%%%%%%%%

The argument of this result in the Euclidean case (i.e. $\a(x)\equiv 1$) is given \cite[Thm.~2.2]{sternberg1992existence}. In what follows, we indicate how their proof goes through as well in our inhomogeneous setting given by a weight function $\a(x)$.
\smallskip

As in the statement of the theorem, consider sets $E_1$ and $E_2$ minimizing the $\a$-perimeter in an open set $U\subset \R^n$, and we assume that $x\in \partial E_1\cap\partial E_2\cap U$. Let us select $r>0$ small so that $B_r(x)\subset\subset U$ and observe that, in light of $\overline{\partial^* E_j}=\partial E_j$ (see e.g~\cite[\S4]{giusti1984minimal}), it follows that $x$ is in the closure of $(\partial^* E_j)\cap B_r(x)$, for both $j=1,2$.

According to Definition~\ref{def1}, each connected component $S_{j,m}$ of $(\partial^*E_j)\cap B_r(x)$  equipped with the induced orientation of $\R^n$, for each $j=1,2$ and $m\geq 0$, can be considered as an $(n-1)$-dimensional multiplicity one rectifiable current $T_{j,m}:=\curr{S_{j,m}}\lelbow B_r(x)$ in the ambient Riemannian manifold $\R^n$ endowed with the metric $\g(x)=\a^{2/(n-1)}(x)\delta_{ij}dx^idx^j$. The choice of the power $2/(n-1)$ in the weight function $\a$ guarantees that $\M_{B_r(x),\g}(\partial\curr{E_j})=\P_{\a}(E_j,B_r(x))$, as stated in Theorem~\ref{thm:massequiv}. In particular, since each $\partial E_j$ minimizes the $\a$-area in $B_r(x)$, then every $S_{j,m}$ is in fact a mass-minimizing current for the metric $\g$ considered above. This condition together with the non-degeneracy \eqref{ND} of $\a$ implies that any such component $S_{j,m}$ which intersects $B_{\rho/2}(x)$ for $\rho>0$ small, satisfies furthermore $\M_{B_{\rho}(x),\g}(S_{j,m})\geq c\rho^n$ for some constant $c>0$~(see e.g. \cite[\S5.1.6]{federer1969geometric}), so at most finitely many components of $(\partial^* E_j)\cap B_{\rho}(x)$ can intersect $B_{\rho/2}(x)$.

This argument shows as a matter of fact that $x$ lies in the closure of some connected component of $(\partial^* E_1)\cap B_{r}(x)$ and of $(\partial^* E_2)\cap B_{r}(x)$, say $S_{1,m_1}$ and $S_{2,m_2}$, respectively, where $m_1,m_2\geq 0$.  Let us observe that for fixed $j\in\{1,2\}$ and non-negative integers $m\neq m'$, the sets $S_{j,m}$ and $S_{j,m'}$ are disjoint by definition. Hence, an application of Theorem~\ref{thm:simonmaxpple} with $T_{j,m}=\curr{S_{j,m}}\lelbow B_r(x)$ ($\spt T_{j,m}=\overline{S_{j,m}}\subset\partial E_j$) shows, in fact, that $x$ is in the closure of {\em precisely one} component of $(\partial^* E_1)\cap B_{r}(x)$ and of $(\partial^* E_2)\cap B_{r}(x)$, respectively. Thus, the numbers $m_1,m_2$ are uniquely determined; because of this fact let us simplify notation by writing $C_j:=S_{j,m_j}$ for $j=1,2$, so that
\begin{equation}\label{eqn:strictmaxppleproof}
x\in \bar{C}_1\cap\bar{C}_2.
\end{equation}
The rest of the proof consists of showing that $\bar{C}_1=\bar{C}_2$. Once this is accomplished, the conclusion of Theorem~\ref{thm:setmaxpple} readily follows: as seen in the preceding paragraph, there are only finitely many components of $(\partial^* E_j)\cap B_r(x)$ that can intersect any compact subset of $B_r(x)$, so if $\rho>0$ is chosen small enough, then we have 
\[
\partial E_1\cap B_{\rho}(x)=\bar{C}_1\cap B_{\rho}(x)=\bar{C}_2\cap B_{\rho}(x)=\partial E_2\cap B_{\rho}(x).
\] 
Let us first assume that $C_1\cap C_2\neq \emptyset$. Using that $E_1\subset E_2$, we deduce that $C_1$ lies locally on one side of $C_2$ near each point of $C_1$. Since $C_j\subset \reg E_j$ for $j=1,2$ we know that $C_1, C_2$ can be locally represented as $C^2$-hypersurfaces around any contact point, whence the strict maximum principle Proposition~\ref{prop:strongmaxpple} together with the connectedness of $C_1$ and $C_2$ implies that $C_1=C_2$, and in particular $\bar{C}_1=\bar{C}_2$, which concludes the analysis of this case. 

For the second case let us suppose that $C_1\cap C_2=\emptyset$. We now invoke an argument of Simon from the proof of Theorem~\ref{thm:simonmaxpple} so as to characterize $C_1,C_2$ as the boundaries of sets $F_1,F_2$ which are nested (see~\cite[pp.~330]{simon1987strict}; here $F_j$  corresponds to his $E_j$). This will allow us to use Corollary~\ref{cor:simonsetmaxpple} to finish the  proof of Theorem~\ref{thm:setmaxpple}. We now recall the argument in~\cite{simon1987strict}. Let us write $K:=\bar{C}_1\cap\bar{C}_2\cap B_r(x)$ and observe that the regularity theory of $\a$-area minimizing sets~\eqref{eqn:regularitySSA} and our hypothesis $C_1\cap C_2=\emptyset$ imply that $\Hh^{n-2}(K)=0$. Also, we have already seen that the current $A_j:=\curr{C_j}\lelbow B_r(x)$ $(=T_{j,m_j})$ is a mass minimizing current in $B_r(x)$ for the metric $\g(x)=\a^{2/(n-1)}(x)\delta_{ij}dx^idx^j$ where $\reg A_j=C_j$ is connected and $\partial A_j=0$ in $B_r(x)$. Hence the decomposition theorem~\cite[\S4.5.17]{federer1969geometric} shows that $A_j=\partial\curr{F_j}\lelbow B_r(x)$ for some measurable set $F_j\subset B_r(x)$ for $j=1,2$. Up to an $\Hh^n$-negligible set we can consider $F_j$ to be one component of $B_r(x)\setminus \bar{C_j}$, so in particular $F_j$ is open, and connected, as $C_j$ is. In addition, we can arrange $C_1\cap F_2\neq\emptyset$ and $C_2\setminus \bar{F_1}\neq\emptyset$, by reversing orientations if necessary. This observation can be used together with the definition of $F_1$ and the fact that $F_2$ is open, to deduce that $F_1\cap F_2\neq\emptyset$. On the other hand, an application of the Poincare inequality~\cite[\S4.5.3]{federer1969geometric} together with the fact that $\Hh^{n-2}(K)=0$, and the connectedness of $C_1,C_2$, shows that $C_1\subset F_2\cup K$ and $C_2\subset (B_r(x)\setminus \bar{F}_1)\cup K$. From here we readily see that $F_1\subset F_2$, since otherwise we could choose a path $\gamma$ contained in $F_1$ connecting a point in $F_2$ to a point in $\bar{C}_2$ (recall $F_1$ is connected, and the definition of $F_2$), thus showing that $\bar{C}_2\cap F_1 \neq\emptyset$. Since $F_1$ is open, the latter observation yields $C_2\cap F_1\neq \emptyset$, but this contradicts the fact that $C_2\subset B_r(x)\setminus F_1$. Thus we have established that 
\begin{equation}\label{eqn:strictmaxppleproof2}
A_j=(\partial\curr{F_j})\lelbow B_r(x)\;\text{ for }\;j=1,2,\;\;\text{ with }\;F_1\subset F_2.
\end{equation}
Thus $A_j$ for each $j\in\{1,2\}$ is a mass-minimizing current in $B_r(x)$ satisfying~\eqref{eqn:strictmaxppleproof2} with $\spt A_j=\bar{C}_j$ connected, for which $\spt A_1\cap \spt A_2\cap B_r(x)\neq\emptyset$ due to~\eqref{eqn:strictmaxppleproof}. In light of these observations, we can apply Corollary~\ref{cor:simonsetmaxpple} to conclude that $\bar{C}_1=\bar{C}_2$. This not only finishes the analysis of our last case, but also the proof of Theorem~\ref{thm:setmaxpple}.\qed

%%%%%%%%%%%%%%%%%%%%%%%%%%%%%%%%%%%%%%%%%
\section{A geometric variational problem}
%%%%%%%%%%%%%%%%%%%%%%%%%%%%%%%%%%%%%%%%%

In this section we introduce an auxiliary variational problem for subsets of $\R^n$, which will be the cornerstone in the construction of a solution $\u$ to the original problem~\eqref{aLGP}. We follow the outline of the strategy introduced in~\cite{sternberg1992existence}. 

We will write henceforth $[a,b]:=g(\partial\Omega)$, and let us observe that $g\in C(\partial\Omega)$ admits a continuous extension on the complement of $\Omega$
\[
G\in BV(\R^n\setminus \bar{\Omega})\cap C(\R^n\setminus \Omega),\quad G=g\;\text{ on }\;\partial\Omega.
\] 
In fact, as shown in \cite[Thm.~2.16]{giusti1984minimal}, we can require that $\spt G\subset B_R(0)$ with $R$ chosen large enough so $\Omega\subset\subset B_R(0)$. Next, we introduce sets that will ensure our constructed solution satisfies the Dirichlet boundary condition $u=g$ on $\partial\Omega$. For each $t\in [a,b]$, we let
\begin{equation}\label{eqn:Lt}
\L_t:=\{x\in(\R^n\setminus \Omega): \, G(x)\geq t\}.
\end{equation}
The weighted co-area formula (Proposition~\ref{prop:wcoarea}) and the fact $G\in BV(\R^n\setminus \bar{\Omega})$ both imply that $\P_{\a}(\L_t\,,\R^n\setminus \bar{\Omega})<\infty$ for a.e. $t\in [a,b]$. However, the construction of the extension above can be made so that we also have 
$\P_{\a}(\L_t\,,\R^n\setminus \bar{\Omega})<\infty$ for all $t\in [a,b]$, cf.~\cite[Thm.~2.16]{giusti1984minimal}.

We remind the reader that we employ our convention~\eqref{eqn:convention} in defining $\L_t$. Using~\eqref{eqn:charaperim}, $\Hh^{n-1}(\partial\Omega)<\infty$, and the property $\max_{\partial\Omega}\a<\infty$, we deduce that for any $t\in [a,b]$
\begin{equation}\label{eqn:aperimequality}
\P_{\a}(\L_t\,,\R^n)=\P_{\a}(\L_t\,,\R^n\setminus \bar{\Omega})+\Hh^{n-1}\lelbow \a((\partial_M\L_t)\cap(\partial\Omega))<\infty.
\end{equation}
For each $t\in [a,b]$, consider the variational problems\\[-.25cm]
\begin{align}
&\inf\{\P_{\a}(E,\R^n):\, E\setminus \bar{\Omega}=\L_t\setminus \bar{\Omega}\},\label{start}\tag{$\star_t$}\\[.5em]
&\sup\{|E|:\; E \text{ is a solution of }\eqref{start}\}.\label{starprimet}\tag{$\star\star_t$}\notag\\[-0.2cm]\notag
\end{align}
\noindent Let us first observe that  
\begin{prop}\label{prop:existence}
The problem \eqref{starprimet} has a solution, for every $t\in [a,b]$. 
\end{prop}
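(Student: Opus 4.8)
The plan is to run the direct method twice: first to produce a solution of the $\a$-perimeter minimization problem $(\star_t)$, and then, among those, to select one of maximal volume, solving $(\star\star_t)$. To begin, fix $t \in [a,b]$ and consider a minimizing sequence $\{E_k\}$ for $(\star_t)$, i.e.\ sets with $E_k \setminus \bar\Omega = \L_t \setminus \bar\Omega$ and $\P_\a(E_k,\R^n) \to \inf(\star_t)$. The competitor class is nonempty since $\L_t$ itself is admissible, and by \eqref{eqn:aperimequality} it has finite $\a$-perimeter, so $\inf(\star_t) < \infty$. Because $\a \geq \alpha > 0$ by the non-degeneracy \eqref{ND}, a uniform bound on $\P_\a(E_k,\R^n)$ gives a uniform bound on the standard perimeter $\P(E_k,\R^n) \leq \alpha^{-1}\P_\a(E_k,\R^n)$; combined with the uniform containment $E_k \subset B_R(0) \cup (\L_t \setminus \bar\Omega)$, which is bounded since $\spt G \subset B_R(0)$ and $\Omega \subset\subset B_R(0)$, the sequence $\{\chi_{E_k}\}$ is bounded in $BV(\R^n)$ with supports in a fixed ball. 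By $BV$ compactness we extract a subsequence with $\chi_{E_k} \to \chi_{E_t}$ in $L^1_{loc}(\R^n)$ (and hence in $L^1(\R^n)$, by the uniform support bound) for some set $E_t$ of finite perimeter.

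Next I would verify that $E_t$ is admissible and minimal. Admissibility: since $\chi_{E_k} = \chi_{\L_t}$ a.e.\ on $\R^n \setminus \bar\Omega$ for every $k$, the $L^1$ limit satisfies $\chi_{E_t} = \chi_{\L_t}$ a.e.\ on $\R^n \setminus \bar\Omega$, so $E_t \setminus \bar\Omega = \L_t \setminus \bar\Omega$ up to an $\Hh^n$-null set (and with our measure-theoretic closure convention \eqref{eqn:convention} this is exactly the required constraint). Minimality: apply the lower semicontinuity of the $\a$-perimeter, Proposition \ref{prop:lscaperim}, with $U = \R^n$ (or an open ball containing all the relevant sets) to the sequence $\chi_{E_k} \to \chi_{E_t}$, which gives
\[
\P_\a(E_t,\R^n) \leq \liminf_{k\to\infty} \P_\a(E_k,\R^n) = \inf(\star_t),
\]
so $E_t$ is a solution of $(\star_t)$. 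In particular the solution set $\mathcal{S}_t := \{E : E \text{ solves } (\star_t)\}$ is nonempty.

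Finally, to solve $(\star\star_t)$, observe that $\sup\{|E| : E \in \mathcal{S}_t\} \leq |B_R(0)| + |\L_t \setminus \bar\Omega| < \infty$ since every such $E$ sits inside a fixed bounded set; call this supremum $V_t$. Take a sequence $\{F_k\} \subset \mathcal{S}_t$ with $|F_k| \to V_t$. Since each $F_k$ realizes the same value $\P_\a(F_k,\R^n) = \inf(\star_t)$, the argument of the previous two paragraphs applies verbatim to $\{F_k\}$: we extract a subsequence with $\chi_{F_k} \to \chi_{E_t^\star}$ in $L^1(\R^n)$ for some admissible set $E_t^\star$ with $\P_\a(E_t^\star,\R^n) \leq \inf(\star_t)$, hence $E_t^\star \in \mathcal{S}_t$; and $L^1$ convergence of characteristic functions gives $|E_t^\star| = \lim_k |F_k| = V_t$. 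Thus $E_t^\star$ solves $(\star\star_t)$. The only point requiring a little care — and the mild ``obstacle'' here — is the passage between $L^1_{loc}$ and $L^1$ convergence and the bookkeeping of the measure-theoretic-closure convention \eqref{eqn:convention} when asserting the boundary constraint $E \setminus \bar\Omega = \L_t \setminus \bar\Omega$ holds in the required (not merely a.e.) sense; both are handled by the uniform support bound coming from $\spt G \subset B_R(0)$ together with $\overline{\partial^* E} = \partial E$ under \eqref{eqn:convention}.
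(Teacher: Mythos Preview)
Your approach---run the direct method for $(\star_t)$, then again for $(\star\star_t)$---is exactly the paper's strategy, and your use of the non-degeneracy \eqref{ND} to pass from $\a$-perimeter bounds to ordinary perimeter bounds, followed by $BV$ compactness and Proposition~\ref{prop:lscaperim}, is correct in spirit.

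There is, however, a small but genuine gap in your compactness step. You assert that $E_k \subset B_R(0) \cup (\L_t \setminus \bar\Omega)$ is bounded ``since $\spt G \subset B_R(0)$,'' and later that $\sup\{|E|:E\in\mathcal{S}_t\}\leq |B_R(0)|+|\L_t\setminus\bar\Omega|<\infty$. But $\spt G\subset B_R(0)$ means $G\equiv 0$ outside $B_R(0)$, so for any $t\leq 0$ (which is certainly allowed, as $[a,b]=g(\partial\Omega)$ carries no sign restriction) the set $\L_t=\{G\geq t\}$ contains all of $\R^n\setminus B_R(0)$ and is unbounded with infinite Lebesgue measure. In that regime $\chi_{E_k}\notin L^1(\R^n)$, the sequence is not bounded in $BV(\R^n)$, and the volume supremum you write down is $+\infty$.

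The paper sidesteps this by first solving an \emph{auxiliary} problem localized to $B_R$,
\[
\tilde m_t=\inf\{\P_\a(F,B_R):F\subset B_R,\;F\cap(B_R\setminus\bar\Omega)=\L_t\cap(B_R\setminus\bar\Omega)\},
\]
where compactness in $BV(B_R)$ is immediate, and only afterwards glues the minimizer $\F_t$ back to $\L_t\setminus B_R$ to produce $\F^*_t=(\L_t\setminus B_R)\cup\F_t$, checking that this solves $(\star_t)$. The same localization handles $(\star\star_t)$. Your argument is easily repaired along the same lines: since every competitor agrees with $\L_t$ on $\R^n\setminus\bar\Omega$, all the action is inside $\bar\Omega\subset\subset B_R$, so you may run the compactness and the volume maximization on $B_R$ (or on $\bar\Omega$) rather than on $\R^n$. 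Alternatively, one can simply replace $g$ by $g-a+1>0$ at the outset, which forces every $t>0$ and makes each $\L_t$ bounded; this is harmless since \eqref{aLGP} is invariant under adding constants.
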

In the remainder of this article we write $E_t$ for a solution to \eqref{starprimet}; a well defined object in light of Proposition~\ref{prop:existence}. Let us remark that our convention~\eqref{eqn:convention} ensures $E_t\setminus \bar{\Omega}=\L_t\setminus \bar{\Omega}$, and moreover, we observe that $\L_t$ need not be a closed set. 

\begin{proof}[Proof of Proposition~\ref{prop:existence}]
We argue the existence of a solution to~\eqref{start}, because this shows on the one hand that the admissible set  in~\eqref{starprimet} is non-empty, but also the existence of a solution to~\eqref{starprimet} can be derived along the same lines as for~\eqref{start}.

Let us write $m_t$ for the infimum value in~\eqref{start}. Observe $m_t<+\infty$, because $\L_t$ is admissible for~\eqref{start} with $\P_{\a}(\L_t\,,\R^n)<+\infty$ due to~\eqref{eqn:aperimequality}.  Recall that $\Omega\subset\subset B_R$ for $R$ large enough, and let $R$ represent this value throughout. Consider the auxiliary problem
\begin{equation}\label{eqn:auxpb}
\tilde{m}_t:=\inf\{\P_{\a}(F	, B_R):\; F\subset B_R,\; F\cap(B_R\setminus \bar{\Omega})=\L_t\cap(B_R\setminus \bar{\Omega})\}.
\end{equation}
As before, it is easy to see that $\tilde{m}_t<+\infty$. Let $\{F_j\}$ be a minimizing sequence for~\eqref{eqn:auxpb}, so $\P_{\a}(F_j,B_R)\to \tilde{m}_t$ as $j\to\infty$ and $F_j\subset B_R$, $F_j\cap(B_R\setminus \bar{\Omega})=\L_t\cap(B_R\setminus \bar{\Omega})$ for all $j$. The non-degeneracy~\eqref{ND} of the weight function, and the representation formula~\eqref{eqn:charaperim} of the $\a$-perimeter yield
\begin{align*}
\|\chi_{F_j}\|_{BV(B_R)}
&=|F_j|+\int_{B_R}|D\chi_{F_j}|\leq |B_R|+\frac{1}{\alpha}\int_{B_R}\a(x)|D\chi_{F_j}|\\
&\leq |B_R|+(\tilde{m}_t+1)/\alpha<\infty,
\end{align*} 
provided $j$ is large enough. Then, recalling the compact embedding $BV(B_R)\hookrightarrow L^1(B_R)$, we see that there exist a subsequence (denoted in the same way) and a set $\F_t\subset \R^n$ so that $\chi_{F_j}\to \chi_{\F_t}$ in $L^1(B_R)$, as $j\to\infty$. This fact combined with the lower semi-continuity of the $\a$-variation (Proposition~\ref{prop:lscaperim}), shows that
\begin{align*}
& |\F_t|\leq |B_R|,\\
& \P_{\a}(\F_t,B_R)\leq \liminf_{j\to\infty}\P_{\a}(F_j,B_R)=\tilde{m}_t.
\end{align*}
We will have shown that $\F_t$ solves~\eqref{eqn:auxpb}, once we argue that $\F_t$ is admissible for this problem. In view of our convention~\eqref{eqn:convention}, the admissibility follows if 
\[
|(\F_t\cap (B_R\setminus \bar{\Omega}))\Delta(\L_t\cap (B_R\setminus \bar{\Omega}))|=0.
\]
Since each $F_j$ is admissible for~\eqref{start}, the aforementioned $L^1$-convergence implies
\begin{align*}
|(\F_t\cap (B_R\setminus \bar{\Omega}))\Delta(\L_t\cap (B_R\setminus \bar{\Omega}))|
&=|(\F_t\Delta F_j)\cap(B_R\setminus \bar{\Omega})|\\
&\leq |(\F_t\Delta F_j)\cap B_R|=\int_{B_R}|\chi_{F_j}-\chi_{\F_t}|\,dx\xrightarrow[j\to\infty]{} 0.
\end{align*}
Now that we established the existence of a minimizer for the auxiliary problem, we easily get the one for the extended problem~\eqref{start}. Indeed, we put $\F^*_t:=(\L_t\setminus B_R)\cup \F_t$ and notice that $\F_t=(\L_t\cap (B_R\setminus \bar{\Omega}))\cup (\F_t\cap\bar{\Omega})$ shows that this set can be equivalently written
\[
\F^*_t=(\L_t\setminus \bar{\Omega})\cup(\F_t\cap\bar{\Omega}).
\]
In particular, any competitor $E$ of~\eqref{start} satisfies 
\[
\F^*_t\Delta E
=\F^*_t\Delta\{(\L_t\setminus \bar{\Omega})\cup (E\cap \bar{\Omega})\}
=(\F_t\cap\bar{\Omega})\Delta(E\cap\bar{\Omega})\subset\subset B_R,
\]
thus implying the inequality
\begin{align*}
\P_{\a}(E,\R^n)-\P_{\a}(\F^*_t,\R^n)
&=\P_{\a}(E,B_R)-\P_{\a}(\F^*_t,B_R)\\
&=\P_{\a}(E\cap B_R,B_R)-\P_{\a}(\F_t,B_R)\\
&\geq 0,
\end{align*}
where the last inequality comes from the fact that $E\cap B_R$ is admissible for~\eqref{eqn:auxpb}:
\[
(E\cap B_R)\cap (B_R\setminus \bar{\Omega})=(E\setminus \bar{\Omega})\cap B_R=
(\L_t\setminus \bar{\Omega})\cap B_R=\L_t\cap (B_R\setminus \bar{\Omega}).
\]
\end{proof}

%%%%%%%%%%%%%%%%%%%%%%%%%%%%%%%%%%%%%%%%%
\section{Construction of the solution}
%%%%%%%%%%%%%%%%%%%%%%%%%%%%%%%%%%%%%%%%%

In this section we study two crucial properties of the collection of sets $\{E_t:t\in [a,b]\}$. 
The construct of a solution $\u$ to \eqref{aLGP} will be carried out by basically identifying, up to an $\Hh^n$-negligible set, the superlevel set $\{\u\geq t\}$ with $E_t\cap\bar{\Omega}$, for almost all $t\in [a,b]$. The domains for which this construction is possible include bounded Lipschitz domains $\Omega$ with connected boundary, which in addition satisfy the barrier condition~\eqref{eqn:geomCiii}, stated in the introduction.

\smallskip

A key step in our development is the next property of the solution $E_t$ to~\eqref{starprimet}:

\begin{lem}[Boundary values]\label{lem:boundary}
Suppose $\partial\Omega$ satisfies the barrier condition with respect to the weight function $\a$. Then for any $t\in [a,b]$, 
\[
\partial E_t\cap \partial\Omega\subset \{g=t\}.
\]
\end{lem}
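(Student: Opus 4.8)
The plan is to argue by contradiction, exploiting the barrier condition~\eqref{eqn:geomCiii} at a boundary point where the boundary value fails. Suppose the conclusion is false for some $t\in[a,b]$, so there exists $x_0\in\partial E_t\cap\partial\Omega$ with $g(x_0)\neq t$. We split into the two cases $g(x_0)>t$ and $g(x_0)<t$; the first is the substantive one, and the second should follow symmetrically (or by a complementary-set argument, replacing $E_t$ by its complement and $t$ by $-t$, using the symmetry of the variational problems under $u\mapsto -u$, $g\mapsto -g$). So assume $g(x_0)=:s>t$. By continuity of $g$ on $\partial\Omega$ and of the extension $G$ on $\R^n\setminus\Omega$, there is a radius $\rho>0$ so that $G>t$ on $B_\rho(x_0)\setminus\Omega$, which forces $\L_t\supset B_\rho(x_0)\setminus\bar\Omega$; equivalently, the complement $\R^n\setminus\L_t$ — and hence (since $E_t$ agrees with $\L_t$ off $\bar\Omega$) the complement $W:=\Omega\setminus E_t$ — is ``pinched'' near $x_0$: one expects $\Omega\setminus E_t\subset\subset$ a small ball about $x_0$ after shrinking, or at least that $E_t$ fills a neighborhood of $x_0$ inside $\Omega$ up to the boundary.

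The key geometric step is to compare $E_t$ with the competitor obtained by \emph{filling in} near $x_0$. Concretely, for small $\e\in(0,\e_0)$ let $V_*$ be a minimizer of the barrier problem~\eqref{eqn:geomCiii}, i.e. of $\inf\{\P_\a(W,\R^n):W\subset\Omega,\ \Omega\setminus W\subset B_\e(x_0)\}$, and set $\tilde E_t:=E_t\cup V_*$ (or rather replace $E_t$ inside $B_\e(x_0)\cap\Omega$ by the "largest" admissible configuration, which is exactly $V_*$ up to taking $W=E_t$ as a competitor in the barrier problem). One first checks $\tilde E_t$ is admissible for~\eqref{start}: it still agrees with $\L_t\setminus\bar\Omega$ outside $\bar\Omega$, provided $\e<\rho$ so that the modification does not disturb the part of $\L_t$ that lies in $B_\e(x_0)\setminus\bar\Omega$ — here one uses that $G>t$ on that set so enlarging $E_t$ there is harmless. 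Then, using Proposition~\ref{prop:ineq} (the submodularity of $\P_\a$) together with the minimality of $V_*$ in the barrier problem, one derives $\P_\a(\tilde E_t,\R^n)\le\P_\a(E_t,\R^n)$, so $\tilde E_t$ is also a minimizer of~\eqref{start}; and since $|V_*|$ is the \emph{maximal} volume among admissible $W$ (that is what being a barrier minimizer forces after the conclusion $\partial V_*\cap\partial\Omega\cap B_\e(x_0)=\emptyset$ is invoked), we get $|\tilde E_t|>|E_t|$ unless $E_t$ already contained that neighborhood. This contradicts the maximality of volume in~\eqref{starprimet} — \emph{unless} $\partial E_t$ already avoids $\partial\Omega$ near $x_0$, which is what we wanted.

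The decisive input is the barrier condition itself: its conclusion $\partial V_*\cap\partial\Omega\cap B_\e(x_0)=\emptyset$ says the $\a$-perimeter-minimizing "cap" $V_*$ detaches from the boundary, so that $\Omega\setminus V_*$ is compactly contained in $B_\e(x_0)$ and strictly smaller (in measure) than $\Omega\setminus E_t$ if the latter met $\partial\Omega$ near $x_0$. Thus replacing $E_t$ by $E_t\cup(\Omega\setminus(\Omega\setminus V_*))$ — i.e. by a set whose complement in $\Omega$ is contained in that of $V_*$ — both preserves minimality in~\eqref{start} (via Proposition~\ref{prop:ineq} and minimality of $V_*$) and strictly increases volume, contradicting~\eqref{starprimet}.

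\textbf{Main obstacle.} I expect the delicate point to be the bookkeeping that makes the cut-and-paste competitor genuinely admissible and genuinely volume-increasing \emph{simultaneously}: one must choose $\e$ small relative to the continuity radius $\rho$ of $g$ at $x_0$ so that enlarging $E_t$ inside $B_\e(x_0)$ does not violate the frozen boundary data $E_t\setminus\bar\Omega=\L_t\setminus\bar\Omega$, and one must carefully verify — using the $\Hh^n$-measure-theoretic conventions~\eqref{eqn:convention} for representatives of Caccioppoli sets and the fact that $\partial E_t\cap\partial\Omega$ is where the trace jumps — that the modification strictly changes $|E_t|$ rather than only changing $E_t$ on a null set. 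The comparison inequality $\P_\a(\tilde E_t,\R^n)\le\P_\a(E_t,\R^n)$ via Proposition~\ref{prop:ineq} is essentially formal once the localization is set up; the same argument with $E_t$ and $t$ replaced by their "mirror images" handles $g(x_0)<t$, so I would phrase a single lemma covering both inclusions and apply it twice.
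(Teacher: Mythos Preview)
Your overall strategy---contradiction via the barrier condition---matches the paper's, but the execution has a genuine gap in the step you call ``essentially formal.'' Submodularity (Proposition~\ref{prop:ineq}) gives
\[
\P_\a(E_t\cup V_*,\R^n)+\P_\a(E_t\cap V_*,\R^n)\le \P_\a(E_t,\R^n)+\P_\a(V_*,\R^n),
\]
so to conclude $\P_\a(E_t\cup V_*,\R^n)\le\P_\a(E_t,\R^n)$ you need $\P_\a(E_t\cap V_*,\R^n)\ge\P_\a(V_*,\R^n)$, i.e.\ you need $E_t\cap V_*$ to be admissible for the barrier problem~\eqref{eqn:geomCiii}. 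But admissibility requires $\Omega\setminus(E_t\cap V_*)=(\Omega\setminus E_t)\cup(\Omega\setminus V_*)\subset B_\e(x_0)$, and there is no reason for $\Omega\setminus E_t$ to be contained in $B_\e(x_0)$: the set $E_t$ typically has large complement in $\Omega$ away from $x_0$. So the minimality of $V_*$ cannot be invoked here, and the inequality $\P_\a(\tilde E_t,\R^n)\le\P_\a(E_t,\R^n)$ does not follow. Relatedly, your competitor $\tilde E_t=E_t\cup V_*$ is a \emph{global} modification (it fills almost all of $\Omega$, since $V_*\supset\Omega\setminus B_\e(x_0)$), not the local fill-in you describe in words; and the assertion that $|V_*|$ is ``maximal volume among admissible $W$'' is false---the volume maximizer is $W=\Omega$, whereas the barrier condition forces $V_*\ne\Omega$.

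What the paper does instead is contradict minimality in~\eqref{start} directly, with a \emph{strict} perimeter inequality. In the case $g(x_0)<t$ it takes the local competitor $\E_*=(E_t\cap V_*\cap B_\e)\cup(E_t\setminus B_\e)$ and the auxiliary set $\V:=V_*\cup(E_t\cap\Omega)$. The point is that $\V$ \emph{is} admissible for~\eqref{eqn:geomCiii} (its complement in $\Omega$ sits inside $\Omega\setminus V_*\subset B_\e$), yet $x_0\in\partial\V\cap\partial\Omega\cap B_\e$, so by the barrier condition $\V$ is \emph{not} a minimizer and $\P_\a(\V,\R^n)>\P_\a(V_*,\R^n)$ strictly. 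A direct decomposition of reduced boundaries then converts this into $\P_\a(\E_*,\R^n)<\P_\a(E_t,\R^n)$, contradicting~\eqref{start}; the case $g(x_0)>t$ uses $\E_*=E_t\cup(\Omega\setminus V_*)$ and $\V=V_*\cup(\Omega\setminus E_t)$ analogously. Note that the volume problem~\eqref{starprimet} plays no role in this lemma---it is only used later for the strict nesting $E_t\subset\subset E_s$---so your plan to contradict volume maximality is aiming at the wrong target here.
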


\begin{proof}[Proof of Lemma~\ref{lem:boundary}] Although we roughly follow the same line of argumentation as that found in~\cite{sternberg1992existence}, we make no use here of the PDE techniques invoked by those authors to reach a contradiction. Instead, we study topological properties of sets whose boundaries minimize the $\a$-area, while using the barrier condition~\eqref{eqn:geomCiii}.

Arguing by contradiction, let us suppose there exists a point $x_0\in(\partial E_t)\cap(\partial\Omega)$ with $g(x_0)<t$, and let $\e_0=\e_0(x_0)$ be the constant appearing in the barrier condition at $x_0$. We highlight that the existence of a minimizer $V_*$ of the variational problem~\eqref{eqn:geomCiii} can be established using the direct method in the calculus of variations, in the spirit of the proof of Proposition~\ref{prop:existence}. 

It will be convenient to write $B_{\e}:=B_{\e}(x_0)$ for this proof only, to simplify the notation. Let us observe that the continuity of $G$ together with the condition $g(x_0)<t$ guarantees  
\begin{equation}\label{eq:bdrylemmaaux1}
\L_t\cap B_{\e}=\emptyset,
\end{equation}
provided $0<\e<\e_0$ is taken small enough. We now fix such an $\e$. The mere fact that $E_t$ is a competitor for~\eqref{start} and~\eqref{eq:bdrylemmaaux1} yield the containment
\begin{equation}\label{eq:bdrylemmaaux2}
E_t\cap B_{\e}\subset \bar{\Omega}.
\end{equation}

Before continuing, we remark that $V_*$ and $E_t$ necessarily differ from each other near $x_0$,
despite the fact that under the contradiction hypothesis $g(x_0)<t$ the sets $E_t\cap B_{\e}$ and $V_*\cap B_{\e}$ are both contained in $\Omega$. In particular, we are assuming $x_0\in\partial E_t$ while $x_0\notin\partial V_*$ due to the barrier condition (in fact $\partial V_*\cap\partial\Omega\cap B_{\e}=\emptyset$.) It is this difference between $V_*$ and $E_t$ together with the minimizing property of $V_*$ respect to the $\a$-perimeter for inner variations of $\Omega$, that motivates us to utilize $V_*$ as a tool to construct a new set, $\E_*$. This set will have strictly less $\a$-perimeter than $E_t$, which will contradict the minimality of $E_t$ in~\eqref{start}, and corresponds to our main claim in the proof of Lemma~\ref{lem:boundary}. 

The latter will be achieved by proving that $\P_{\a}(E_t,\R^n)>\P_{\a}(\E_*,\R^n)$ where $\E_*:=(E_t\cap V_*\cap B_{\e})\cup (E_t\setminus B_{\e})$. To see this, let us first note from \eqref{eq:bdrylemmaaux1}-\eqref{eq:bdrylemmaaux2} that $\E_*$ is a competitor in \eqref{start}. Moreover, since $E_t=\E_*$ in $\R^n\setminus B_{\e}$ we can use the characterization of the $\a$-perimeter measure \eqref{eqn:charaperim}, to obtain
\begin{align}
\P_{\a}(E_t,\R^n)-\P_{\a}(\E_*,\R^n)
&=\P_{\a}(E_t,B_{\e})-\P_{\a}(\E_*,B_{\e})\notag\\
&=\Hh^{n-1}\lelbow\a(\partial^*E_t\cap B_{\e})-\Hh^{n-1}\lelbow\a(\partial^*(E_t\cap V_*)\cap B_{\e}),\label{eq:bdrylemmaaux3}
\end{align}
where
\begin{align*}
\Hh^{n-1}\lelbow\a(\partial^*E_t\cap B_{\e})
&=\Hh^{n-1}\lelbow\a(\partial^*E_t\cap V_*\cap B_{\e})\\
&\qquad\qquad+\Hh^{n-1}\lelbow\a((\partial^*E_t\setminus V_*)\cap B_{\e}),\\[0.2cm]
\Hh^{n-1}\lelbow\a(\partial^*(E_t\cap V_*)\cap B_{\e})
&=\Hh^{n-1}\lelbow\a(\partial^*E_t\cap V_*\cap B_{\e})\\
&\qquad\qquad+\Hh^{n-1}\lelbow\a(\partial^*V_*\cap E_t\cap B_{\e}),
\end{align*}
see Figure 1(a) below.

Applying these identities to \eqref{eq:bdrylemmaaux3} we obtain
\begin{equation}
\P_{\a}(E_t,\R^n)-\P_{\a}(\E_*,\R^n)
=\Hh^{n-1}\lelbow\a((\partial^*E_t\setminus V_*)\cap B_{\e})-\Hh^{n-1}\lelbow\a(\partial^*V_*\cap E_t\cap B_{\e}).\label{eq:bdrylemmaaux4}
\end{equation}

\begin{figure}[!h]
\centering
  \includegraphics[width=1.\textwidth]{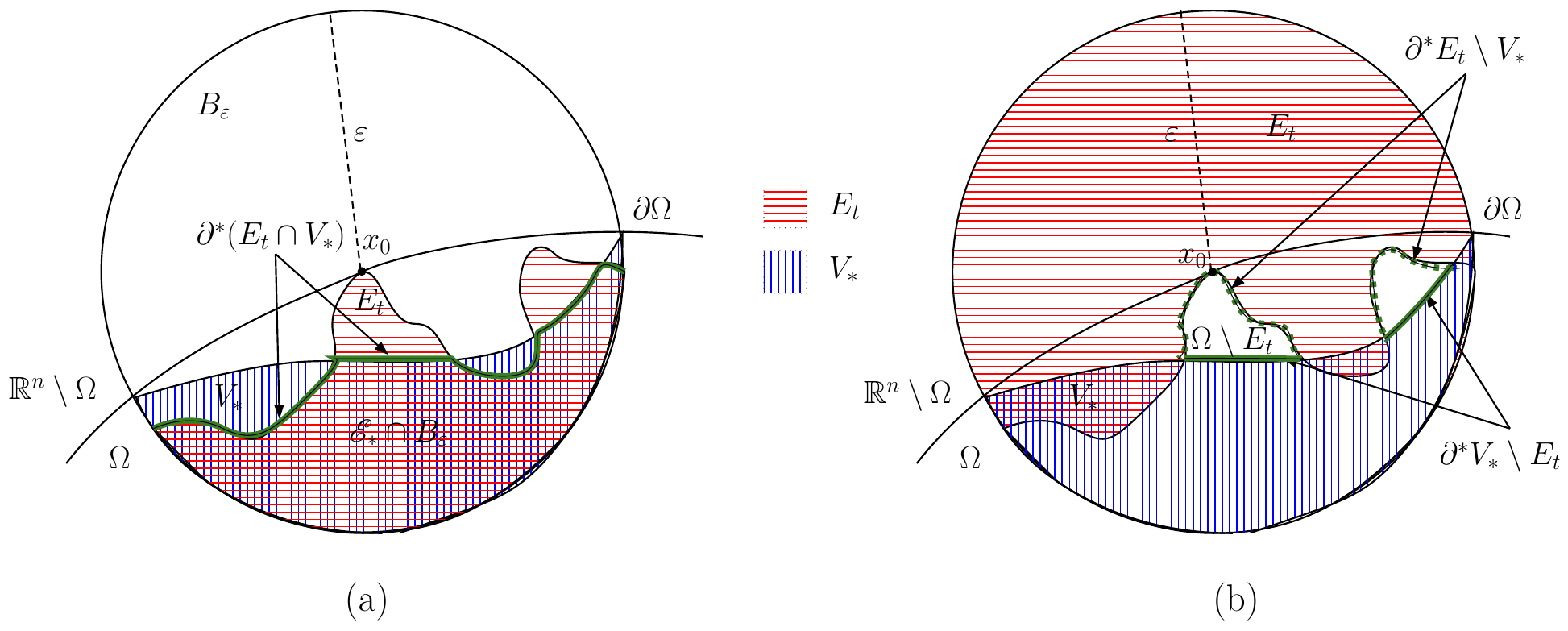}
\caption{\small Sketch of contradiction in the cases: (a) $g(x_0)<t$, and (b) $g(x_0)>t$.}
\end{figure}

On the other hand, let us observe that the set $\V:=V_*\cup (E_t\cap \Omega)$ is admissible for problem~\eqref{eqn:geomCiii}, and furthermore \eqref{eq:bdrylemmaaux2} shows that $E_t\cap B_{\e}=(E_t\cap\Omega)\cap B_{\e}$ $\Hh^n$-a.e. Consequently 
\begin{equation}
\V=([V_*\cup E_t]\cap B_{\e})\cup(V_*\setminus B_{\e}),\label{eq:bdrylemmaaux5}
\end{equation}
in light of our convention~\eqref{eqn:convention}. In particular, the statement $x_0\in \partial E_t$ can be made more precise by means of the barrier condition of $\partial\Omega$ for the minimizer $V_*$ and~\eqref{eq:bdrylemmaaux5}, to read now $x_0\in (\partial E_t\setminus \bar{V}_*)\subset \partial \V$. Put another way, $x_0\in \partial\V\cap\partial\Omega\cap B_{\e}$, so by virtue of the barrier condition once again we see that $\V$ cannot be a minimizer of (18). Hence, by the characterization~\eqref{eqn:charaperim} of the weighted perimeter and the minimality of $V_*$, we derive 
\begin{align*}
0<\P_{\a}(\V,\R^n)-\P_{\a}(V_*,\R^n)
&=\P_{\a}(\V,B_{\e})-\P_{\a}(V_*,B_{\e})\\
&=\Hh^{n-1}\lelbow\a(\partial^*(V_*\cup E_t)\cap B_{\e})-\Hh^{n-1}\lelbow\a(\partial^*V_*\cap B_{\e}),
\end{align*}
where we used that $\V=V_*$ in $\R^n\setminus B_{\e}$. This inequality can be exploited in light of the identities below:
\begin{align*}
\Hh^{n-1}\lelbow\a(\partial^*(V_*\cup E_t)\cap B_{\e})
&=\Hh^{n-1}\lelbow\a((\partial^*E_t\setminus V_*)\cap B_{\e})\\
&\qquad\qquad+\Hh^{n-1}\lelbow\a((\partial^*V_*\setminus E_t)\cap B_{\e}),\\[0.2cm]
\Hh^{n-1}\lelbow\a(\partial^*V_*\cap B_{\e})
&=\Hh^{n-1}\lelbow\a(\partial^*V_*\cap E_t\cap B_{\e})\\
&\qquad\qquad+\Hh^{n-1}\lelbow\a((\partial^*V_*\setminus E_t)\cap B_{\e}),
\end{align*}
to simply get
\begin{equation}
\Hh^{n-1}\lelbow\a(\partial^*V_*\cap E_t\cap B_{\e})
<\Hh^{n-1}\lelbow\a((\partial^*E_t\setminus V_*)\cap B_{\e}).\label{eq:bdrylemmaaux6}
\end{equation}
It immediately follows from~\eqref{eq:bdrylemmaaux4} and \eqref{eq:bdrylemmaaux6} that
\begin{align*}
\P_{\a}(E_t,\R^n)-\P_{\a}(\E_*,\R^n)>0,
\end{align*}
thus finishing the proof of the contradiction argument in the case $g(x_0)<t$. 

The other case where $g(x_0)>t$ is argued again by contradicting the minimality of $E_t$ in~\eqref{start}, nonetheless, for the sake of completeness we briefly discuss the proof. Indeed, the continuity of the extension $G$ of $g$ shows
\begin{equation}
(B_{\e}\setminus\bar{\Omega})\subset E_t\cap B_{\e}\quad\iff\quad (B_{\e}\setminus E_t)\subset\bar{\Omega}\cap B_{\e}. 
\label{eq:bdrylemmaaux7}
\end{equation}
The claim is that $\P_{\a}(E_t,\R^n)>\P_{\a}(\E_*,\R^n)$, if we now take $\E_{*}:=([E_t\cup (\Omega\setminus V_*)]\cap B_{\e})\cup (E_t\setminus B_{\e})$. Just as in the previous case we analyze the difference
\begin{align}
\P_{\a}(E_t,\R^n)-\P_{\a}(\E_*,\R^n)
&=\Hh^{n-1}\lelbow\a(\partial^*E_t\cap B_{\e})-\Hh^{n-1}\lelbow\a(\partial^*(E_t\cup (\Omega\setminus V_*))\cap B_{\e})\notag\\
&=\Hh^{n-1}\lelbow\a((\partial^*E_t\setminus V_*)\cap B_{\e})-\Hh^{n-1}\lelbow\a((\partial^*V_*\setminus E_t)\cap B_{\e}),\label{eq:bdrylemmaaux8}
\end{align}
see Figure 1(b) above.

Let us consider now a new auxiliary set $\V:=V_*\cup (\Omega\setminus E_t)$, and perform a similar analysis as before. Noting $x_0\in\partial E_t$, from \eqref{eq:bdrylemmaaux7} we get $x_0\in \partial(\Omega\setminus E_t)$ which, in addition to $x_0\notin \bar{V}_*$ (by the barrier condition), implies that $x_0\in \partial\V$. Since $\V$ is admissible in~\eqref{start}, the barrier condition once again yields
\begin{align*}
\P_{\a}(\V,\R^n)-\P_{\a}(V_*,\R^n)
&=\Hh^{n-1}\lelbow\a(\partial^*(V_*\cup(\Omega\setminus E_t))\cap B_{\e})
-\Hh^{n-1}\lelbow\a(\partial^*V_*\cap B_{\e})\\
&=\Hh^{n-1}\lelbow\a((\partial^*E_t\setminus V_*)\cap B_{\e})
-\Hh^{n-1}\lelbow\a((\partial^*V_*\setminus E_t)\cap B_{\e})\\
&>0,
\end{align*}
leading to the desired conclusion, in view of \eqref{eq:bdrylemmaaux8}. The proof of Lemma~\ref{lem:boundary} is now complete.
\end{proof}

\medskip

Continuing the study of the family $\{E_t: t\in [a,b]\}$ we now give a basic geometric description on how they are positioned inside of the domain $\Omega$. 
\begin{lem}\label{lem:strictcont}
Suppose $\partial\Omega$ satisfies the barrier condition with respect to $\a$. Then, for any $s,t\in [a,b]$ with $s<t$, there holds
\[
E_t\subset\subset E_s.
\]
\end{lem}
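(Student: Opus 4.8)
The plan is to prove the two assertions $E_t\subseteq E_s$ (nesting) and $\partial E_t\cap\partial E_s\cap\Omega=\emptyset$ (strict separation) separately; together with the boundary information from Lemma~\ref{lem:boundary} these give $\overline{E_t}\subset E_s^i$, i.e.\ $E_t\subset\subset E_s$. First I would establish the \emph{nesting} $E_t\subseteq E_s$ for $s<t$. On $\R^n\setminus\bar\Omega$ this is immediate from the definitions: $E_t\setminus\bar\Omega=\L_t\setminus\bar\Omega$ and $\L_t=\{G\ge t\}\subseteq\{G\ge s\}=\L_s$. Inside $\Omega$ one argues by a standard cut-and-paste comparison: consider the competitors $E_s\cup E_t$ and $E_s\cap E_t$ for problems \eqref{start}. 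Both have the correct trace outside $\bar\Omega$ (since $\L_t\subseteq\L_s$), so $E_s\cup E_t$ is admissible for $(\star_s)$ and $E_s\cap E_t$ for $(\star_t)$. By the submodularity inequality of Proposition~\ref{prop:ineq},
\[
\P_{\a}(E_s\cup E_t,\R^n)+\P_{\a}(E_s\cap E_t,\R^n)\le \P_{\a}(E_s,\R^n)+\P_{\a}(E_t,\R^n)=m_s+m_t,
\]
while minimality gives $\P_{\a}(E_s\cup E_t,\R^n)\ge m_s$ and $\P_{\a}(E_s\cap E_t,\R^n)\ge m_t$. Hence both are equalities, so $E_s\cup E_t$ solves $(\star_s)$ and $E_s\cap E_t$ solves $(\star_t)$. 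Now invoke \emph{maximality of volume}: since $E_s$ solves $(\star\star_s)$ and $E_s\cup E_t$ is another solution of $(\star_s)$, we get $|E_s\cup E_t|\le|E_s|$, forcing $|E_t\setminus E_s|=0$, i.e.\ $E_t\subseteq E_s$ in the measure-theoretic sense of~\eqref{eqn:convention}.

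Next I would upgrade this to \emph{strict} containment inside $\Omega$ using the maximum principle. Since $E_t\subseteq E_s$ and both $\partial E_t$, $\partial E_s$ minimize the $\a$-area in $\Omega$ (this is part of the content of the construction — the boundaries are $\a$-area minimizing in $\Omega$ with respect to competitors with the same exterior data, which one checks via a localization argument as in Proposition~\ref{prop:existence}), Theorem~\ref{thm:setmaxpple} applies at any interior contact point. Suppose $x\in\partial E_t\cap\partial E_s\cap\Omega$; then $\partial E_t$ and $\partial E_s$ coincide in a neighborhood of $x$. A connectedness/clopen argument on $\partial E_t\cap\Omega$ then shows the two boundaries agree on the whole connected component, and one concludes (exactly as in~\cite{sternberg1992existence}) that $E_t$ and $E_s$ differ by a set that is compactly contained in $\Omega$ and has $\a$-area-minimizing boundary with no boundary in $\partial\Omega$; comparing volumes with $(\star\star_s)$ (one can strictly enlarge $E_s$ toward $E_s$'s own component structure) or directly invoking the barrier condition to rule out a closed $\a$-minimal component, one forces $E_t=E_s$, contradicting $s<t$ together with Lemma~\ref{lem:boundary} which gives $\partial E_t\cap\partial\Omega\subseteq\{g=t\}$ and $\partial E_s\cap\partial\Omega\subseteq\{g=s\}$, disjoint sets. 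Hence $\partial E_t\cap\partial E_s\cap\Omega=\emptyset$.

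Finally, combining the pieces: $E_t\subseteq E_s$, the two topological boundaries are disjoint inside $\Omega$, and on $\partial\Omega$ we have $\partial E_t\cap\partial\Omega\subseteq\{g=t\}$ disjoint from $\partial E_s\cap\partial\Omega\subseteq\{g=s\}$. Therefore $\overline{E_t}\cap\partial E_s=\emptyset$, which means $\overline{E_t}\subseteq E_s^i$, i.e.\ $E_t\subset\subset E_s$. \textbf{The main obstacle} I anticipate is the passage from ``$\partial E_t$ and $\partial E_s$ agree near one interior point'' (the local statement of Theorem~\ref{thm:setmaxpple}) to the \emph{global} conclusion that they are in fact equal or strictly separated: one must carefully use that $\partial E_t\cap\Omega$ is relatively closed, that the agreement set is also relatively open by the maximum principle, and then track the boundary behavior via Lemma~\ref{lem:boundary} to derive a contradiction with $s<t$; in the Euclidean case \cite{sternberg1992existence} this uses that $\partial\Omega$ is not locally area-minimizing, and the analogue here is precisely the barrier condition, so one has to make sure the barrier condition is being invoked in the right place to exclude a stray $\a$-minimal component of $E_s\setminus E_t$.
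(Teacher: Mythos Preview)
Your nesting argument ($E_t\subseteq E_s$ via submodularity and volume maximality) is exactly the paper's, and your overall strategy for strictness---use Theorem~\ref{thm:setmaxpple} to make the contact set $S=\partial E_t\cap\partial E_s\cap\Omega$ clopen in $\partial E_t$, then derive a contradiction---is also the paper's. The gap is in how you close the contradiction.

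You propose two routes: either ``compare volumes with $(\star\star_s)$'' or ``invoke the barrier condition to rule out a closed $\a$-minimal component,'' and you aim to force $E_t=E_s$ globally, contradicting $\partial E_t\cap\partial\Omega\subset\{g=t\}$ versus $\partial E_s\cap\partial\Omega\subset\{g=s\}$. Neither route is right as stated. First, $E_t$ and $E_s$ \emph{never} coincide globally when $s<t$: outside $\bar\Omega$ they equal $\L_t$ and $\L_s$ respectively, which differ. So the target ``$E_t=E_s$'' is the wrong one. Second, the barrier condition is a statement about $\partial\Omega$ and inner deformations near $\partial\Omega$; it says nothing directly about a hypothetical closed $\a$-minimal hypersurface lying compactly inside $\Omega$. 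Third, the volume-maximality route does not obviously exclude such a component either: once $E_s\cup E_t$ is shown to solve $(\star_s)$, volume maximality gave you $|E_t\setminus E_s|=0$ and then you have already extracted all it offers.

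What the paper actually does (and what you correctly flag as the main obstacle) is invoke an independent lemma, Lemma~\ref{lem:boundarycond} (taken from \cite{jerrard2018existence}): for any set $E$ that is $\a$-perimeter minimizing in $\Omega$, every connected component of $\reg(\partial E)\cap\Omega$ has closure meeting $\partial\Omega$. This uses only that $\partial\Omega$ is connected, not the barrier condition. The contradiction is then immediate: by the clopen argument plus Lemma~\ref{lem:boundary}, $S$ is a union of connected components of $\partial E_t$ \emph{disjoint} from $\partial\Omega$; by Lemma~\ref{lem:boundarycond} applied to $\partial E_t$ (together with density of $\reg(\partial E_t)$ in $\partial E_t$), every such component must meet $\partial\Omega$. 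Hence $S=\emptyset$. You should replace your vague ``barrier/volume'' step with this lemma.
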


\begin{proof}[Proof of Lemma~\ref{lem:strictcont}]
The containment $E_t\subset E_s$ follows from the same argument as in \cite{sternberg1992existence}.  
Let us observe $E_s\cap E_t$ is a competitor with $E_t$ in \eqref{start},
\begin{align*}
(E_s\cap E_t)\setminus \bar{\Omega}
&=(E_s\setminus \bar{\Omega})\cap (E_t\setminus \bar{\Omega})\\
&=(\L_s\setminus \bar{\Omega})\cap (\L_t\setminus \bar{\Omega})=\L_t\setminus \bar{\Omega}.
\end{align*}
In a similar fashion, it can be readily seen $E_s\cup E_t$ is a competitor with $E_s$ in $(\star_s)$. It follows
\[
\P_{\a}(E_s\cap E_t,\R^n)\geq \P_{\a}(E_t,\R^n)\;\;\text{ and }\; \P_{\a}(E_s\cup E_t,\R^n)\geq \P_{\a}(E_s,\R^n).
\]
As the $\a$-perimeter satisfies Proposition~\ref{prop:ineq}, the above inequalities imply $\P_{\a}(E_s\cup E_t,\R^n)=\P_{\a}(E_s,\R^n)$. On the other hand, $E_s$ solves the problem $(\star\star_s)$ thus yielding $|E_s\cup E_t|=|E_s|$. We conclude $|E_t\setminus E_s|=0$, which in view of our convention~\eqref{eqn:convention} then yields
\begin{equation}\label{eqn:auxcontainmt}
E_t\subset E_s.
\end{equation}
It remains to show that this containment is actually strict. This method uses topological arguments along with techniques from geometric measure theory, and is an adaptation from the proof of this lemma in~\cite{sternberg1992existence}. Let us start noting
\begin{equation}\label{eqn:containment}
E_t\setminus \bar{\Omega}=\L_t\setminus \bar{\Omega}\subset\subset \L_s\setminus \bar{\Omega}=E_s\setminus \bar{\Omega},
\end{equation}
relative to the topology on $\Omega^c$. In addition, since $s<t$ we observe that Lemma~\ref{lem:boundary} implies 
\begin{equation}\label{eqn:boundary}
\partial E_t\cap \partial E_s\cap \partial\Omega=\emptyset.
\end{equation}
In consideration of \eqref{eqn:auxcontainmt}-\eqref{eqn:containment}-\eqref{eqn:boundary} we will prove the statement of this Lemma by showing 
\[
S:=\partial E_s\cap \partial E_t\cap \Omega=\emptyset.
\]
For this purpose let us assume on the contrary that $S\neq\emptyset$. The goal of the remainder of our proof is to verify the points below:
\begin{enumerate}[label={\rm(\Roman*)},itemsep=0pt,leftmargin=*]
\item\label{enum:topproofI} $S$ consists of the connected components of $\partial E_t$ that do not intersect $\partial\Omega$.
\item\label{enum:topproofII}  If $S'$ denote any connected component of $\reg(S)$, then $\bar{S'}$ has to intersect $\partial\Omega$.
\end{enumerate}
Using the density of $\reg(\partial E_t)$ in $\partial E_t$ (see \eqref{eqn:regularitySSA} in \S\ref{sec:notation}), we immediate conclude from~\ref{enum:topproofI}-\ref{enum:topproofII}  that $S$ must be empty, thus reaching a contradiction.

\smallskip

To argue~\ref{enum:topproofI}  first note that $S$ is open relative to $\partial E_t$, for if $x\in S$, then from $E_t\subset E_s$ and from the fact that both $\partial E_t$, $\partial E_s$ are $\a$-area minimizing in $\Omega$, we can apply the maximum principle Theorem~\ref{thm:setmaxpple} to conclude that $\partial E_t$ and $\partial E_s$ must agree on a neighborhood of $x$. On the other hand, $S$ is clearly closed relative to $\partial E_t$, so from~\eqref{eqn:boundary} we get that every connected component of $S$ is disjoint from $\partial\Omega$.

\medskip

The proof of~\ref{enum:topproofII}  is based on the fact that $\partial E_t$ is $\a$-area minimizing in $\Omega$. This is a rather general fact as given in the following

\begin{lem}[\!\!{\cite[Lem.~4.5]{jerrard2018existence}}]\label{lem:boundarycond}
Let $\Omega$ be a bounded Lipschitz domain with connected boundary, and assume $E\subset \R^n$ is $\a$-perimeter minimizing in $\Omega$. If $R$ is a connected component of $\reg(\partial E)\cap \Omega$, then $\bar{R}\cap \partial\Omega\neq\emptyset$. 
\end{lem}

The proof of Lemma~\ref{lem:boundarycond} is based on topological arguments in geometric measure theory. A full proof can be found in~\cite{jerrard2018existence}, where the authors argue by contradiction that if a set $E$ is $\a$-perimeter minimizing and admits some component of $\reg(\partial E)$ whose closure is disjoint from $\partial\Omega$, then $E$ could have not been $\a$-perimeter minimizing in the beginning.
\end{proof}

\smallskip

%%%%%%%%%%%%%%%%%%%%%%%%%%%%%%%%%%%%%%%%%
\section{Proof of the main Theorem~\ref{thm:main1}}
%%%%%%%%%%%%%%%%%%%%%%%%%%%%%%%%%%%%%%%%%

We are now in position to build up a continuous solution to the weighted least gradient problem \eqref{aLGP}.
For $t\in [a,b]$, we define
\begin{equation}\label{eqn:defAt}
A_t=\overline{(E_t\cap\Omega)}.
\end{equation}
Let us observe that $E_t$ is a closed set since every point of $\partial E_t$ is either a regular point of $\partial E_t$ or a point where the tangent cone exists (see~\S 2). This shows that $\partial E_t\subset \partial_M E_t$, and so our convention~\eqref{eqn:convention} then yields $\partial E_t\subset E_t$. The latter observation combined with the identity $\partial(U\cap V)\cap V^i=\partial U\cap V^i$ for any sets $U,V$ and the fact that $\Omega$ is open, show altogether
\[
A_t\cap\Omega=(\partial E_t\cap\Omega)\cup ((E_t)^i\cap\Omega)\subset E_t\cap\Omega,
\]
therefore $A_t\cap\Omega=E_t\cap\Omega$, for any $t\in[a,b]$. In addition, using the boundary value  Lemma~\ref{lem:boundary} and topological considerations we can see that for any $t\in [a,b]$, there hold
\begin{align}
& \{g>t\}\subset (E_t)^i\cap\partial\Omega\subset A_t\cap\partial\Omega,\label{eqn:Atcontainment1}\\[.2em]
& \overline{\{g>t\}}\subset A_t\cap\partial\Omega\subset 
[(E_t)^i\cup \partial E_t]\cap\partial\Omega\subset \{g\geq t\}.\label{eqn:Atcontainment2}%\\ \notag
\end{align}
Finally, we observe that for any $s<t$ with $s,t\in [a,b]$
\begin{equation}\label{eqn:strictcontAt}
A_t\subset\subset A_s,
\end{equation}
relative to $\bar{\Omega}$. Indeed, topological considerations show that for a.e.~$t$, $\partial_{\bar{\Omega}}A_t\subset \partial E_t$, where $\partial_{\bar{\Omega}}$ denotes the topological boundary relative to the subspace topology of $\bar{\Omega}$. The validity of \eqref{eqn:strictcontAt} is then a mere consequence of Lemma~\ref{lem:boundary} and Lemma~\ref{lem:strictcont} combined.\\

The definition of our candidate for a solution is the one given below:
\begin{equation}\label{eqn:defnsoln}
\u(x):=\sup\{t\in [a,b]: \; x\in A_t\}.
\end{equation}
The next result asserts that $\u$ gives rise to a continuous function which meets the boundary condition in the strong sense.  

\begin{thm}\label{thm:constructionsoln}
The function $\u$ given in~\eqref{eqn:defnsoln} and the set $A_t$ defined in~\eqref{eqn:defAt} satisfy \begin{enumerate}[label={\rm(\roman*)},itemsep=0pt,leftmargin=*]
\item $\u=g$ on $\partial\Omega$.\\[-0.3cm]
\item $\u\in C(\bar{\Omega})$.\\[-0.3cm]
\item\label{enum:thmu:iii}$A_t\subset \{\u\geq t\}$ for all $t\in [a,b]$, and $|\{\u\geq t\}\setminus A_t|=0$ for a.e. $t\in [a,b]$.
\end{enumerate}
\end{thm}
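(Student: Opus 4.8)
The plan is to establish the three assertions in order, extracting continuity from the strict-separation property \eqref{eqn:strictcontAt} much as in \cite{sternberg1992existence}. First I would verify (iii). The containment $A_t\subset\{\u\geq t\}$ is essentially immediate from the definition \eqref{eqn:defnsoln}: if $x\in A_t$ then $t$ belongs to the set over which the supremum is taken, so $\u(x)\geq t$. For the reverse inclusion up to a null set, I would use the nesting $A_s\subset\subset A_t$ for $s>t$ together with the fact that the family $\{A_t\}$ is decreasing in $t$; the point is that $\{\u\geq t\}=\bigcap_{s<t}\{\u> s\}$ can be sandwiched between $A_t$ and $\bigcup_{s>t}A_s$, and the latter differs from $A_t$ only on the set where the map $t\mapsto|A_t|$ is discontinuous, which is at most countable since $t\mapsto |A_t|$ is monotone. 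Hence $|\{\u\geq t\}\setminus A_t|=0$ for all but countably many $t$, in particular for a.e.~$t\in[a,b]$.

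Next I would prove (i): $\u=g$ on $\partial\Omega$. Fix $x_0\in\partial\Omega$ and set $c:=g(x_0)$. From \eqref{eqn:Atcontainment1}, for every $t<c$ we have $x_0\in\overline{\{g>t\}}\subset A_t\cap\partial\Omega$ (using continuity of $g$ to find nearby boundary points with $g>t$), so $\u(x_0)\geq t$ for all $t<c$, giving $\u(x_0)\geq c$. Conversely, from the right inclusion in \eqref{eqn:Atcontainment2}, $A_t\cap\partial\Omega\subset\{g\geq t\}$, so if $\u(x_0)>t$ then $x_0\in A_s\cap\partial\Omega$ for some $s>t$, whence $g(x_0)\geq s>t$; letting $t\uparrow\u(x_0)$ yields $g(x_0)\geq\u(x_0)$. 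Combining the two inequalities gives $\u(x_0)=g(x_0)$.

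The main work is (ii), continuity of $\u$ on $\bar\Omega$, and this is where the strict-separation Lemma~\ref{lem:strictcont} (via \eqref{eqn:strictcontAt}) does the real lifting. I would show $\u$ is both upper and lower semicontinuous. For upper semicontinuity: the superlevel sets $\{\u\geq t\}$ agree with $A_t$ up to the topological subtleties already handled, and since each $A_t$ is closed relative to $\bar\Omega$, $\{\u\geq t\}$ is "almost closed"; more precisely, using \eqref{eqn:strictcontAt}, $\{\u\geq t\}=\bigcap_{s<t}A_s$ is an intersection of closed sets, hence closed, so $\u$ is upper semicontinuous. For lower semicontinuity I would show the sublevel sets $\{\u\leq t\}=\bar\Omega\setminus\{\u>t\}$ are closed, equivalently that $\{\u>t\}=\bigcup_{s>t}A_s$ is open; here is exactly where strict containment enters, since $\bigcup_{s>t}A_s=\bigcup_{s>t}(A_s)^{i_{\bar\Omega}}$ because $A_s\subset\subset A_r$ for $r<s$ forces each $A_s$ to lie in the $\bar\Omega$-interior of every larger $A_r$. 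Thus $\{\u>t\}$ is a union of relatively open sets, hence relatively open, and $\u$ is lower semicontinuous. Together with (i), which pins down the boundary values, this gives $\u\in C(\bar\Omega)$.

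\textbf{Expected main obstacle.} The delicate point is the bookkeeping around the measure-theoretic versus topological representatives in (iii) and the passage from "strict containment relative to $\bar\Omega$" to genuine relative-openness of $\{\u>t\}$ in part (ii) — one must be careful that \eqref{eqn:strictcontAt} holds for \emph{every} pair $s<t$ (not merely a.e.), which in turn relies on Lemma~\ref{lem:boundary} and Lemma~\ref{lem:strictcont} holding for all $t$, and on the remark that $\partial_{\bar\Omega}A_t\subset\partial E_t$. I expect the cleanest route is to first record that $t\mapsto A_t$ is monotone decreasing with $A_t\subset\subset A_s$ for $s<t$, deduce from this that $A_t=\bigcap_{s<t}A_s$ and $\bigcup_{s>t}A_s$ is relatively open, and only afterwards reconcile these topological superlevel sets with the measure-theoretic ones $\{\u\geq t\}$ via the countability-of-jumps argument.
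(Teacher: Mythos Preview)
Your argument is correct and follows exactly the route the paper intends: the paper does not give a detailed proof but simply defers to \cite[Thm.~3.5]{sternberg1992existence}, noting that only \eqref{eqn:Atcontainment1}--\eqref{eqn:Atcontainment2} and \eqref{eqn:strictcontAt} are needed, and your reconstruction is precisely that Sternberg--Williams--Ziemer argument. One small slip in part~(iii): the set that bounds $\{\u\geq t\}$ from above is $\bigcap_{s<t}A_s$ (which equals $\{\u\geq t\}$), not $\bigcup_{s>t}A_s$ (the latter sits \emph{inside} $A_t$); with that correction your countability-of-jumps argument for the monotone map $t\mapsto|A_t|$ goes through verbatim.
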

The proof of Theorem 3.5 in \cite{sternberg1992existence} naturally carries over to prove our Theorem~\ref{thm:constructionsoln}, since it relies solely on \eqref{eqn:Atcontainment1}-\eqref{eqn:Atcontainment2}, plus some basic topological and analytic considerations.\\

Finally, let us now restate and provide a proof of our main theorem:\\[.2cm]
\noindent {\bf Theorem~\ref{thm:main1}.} 
{\em For $n\geq 2$, let $\Omega\subset\R^n$ be a bounded Lipschitz domain with boundary satisfying the barrier condition~\eqref{eqn:geomCiii}, and let $\a\in C^2(\bar{\Omega})$ be a non-degenerate weight function in the sense of~\eqref{ND}. Then for any boundary data $g\in C(\partial\Omega)$, the function $\u$ defined in~\eqref{eqn:defnsoln} is a continuous solution to 
\begin{equation}\label{eqn:aLGP}
\inf\left\{\int_{\Omega}\a(x)|Dv|: \,v\in BV(\Omega),\;\; v=g \text{ on }\partial\Omega\right\},
\end{equation}
where $v=g$ is understood in the sense of traces of $BV$-functions. Furthermore, the superlevel sets of $\u$ minimize the weighted perimeter measure $\P_{\a}(\cdot,\Omega)$ with respect to competitors meeting the boundary conditions imposed by $g$ on $\partial\Omega$.  
} 

\begin{proof}[Proof of Theorem~\ref{thm:main1}]
We follow the outline of the proof of Theorem 3.7 in~\cite{sternberg1992existence}. For any competitor $v\in BV(\Omega)$ of~\eqref{eqn:aLGP}, consider the extension $\bar{v}\in BV(\R^n)$ of $v$ with $\bar{v}=G$ in $\R^n\setminus\bar{\Omega}$, and let $F_t=\{\bar{v}\geq t\}$. It is sufficient to show that 
\begin{equation}\label{eqn:aperimineqomega}
\P_{\a}(E_t,\Omega)\leq \P_{\a}(F_t,\Omega)\;\;\text{ for a.e. }t\in [a,b],
\end{equation}
since the weighted co-area formula (cf. Proposition~\ref{prop:wcoarea}) would then yield
\[
\int_{\Omega}\a(x)|D\u|= \int^b_a\P_{\a}(E_t,\Omega)\,dt\leq \int^{+\infty}_{-\infty}\P_{\a}(F_t,\Omega)\,dt=\int_{\Omega}\a(x)|D\bar{v}|<\infty,
\]
where in the first identity we have used that $A_t\cap\Omega=E_t\cap\Omega$ for all $t\in[a,b]$ and Theorem~\ref{thm:constructionsoln}\ref{enum:thmu:iii}. Let us start arguing~\eqref{eqn:aperimineqomega} by noting for all $t\in [a,b]$ that $F_t\setminus \bar{\Omega}=\L_t\setminus \bar{\Omega}$, while the set $E_t$ minimizes the $\a$-perimeter amongst competitors satisfying this condition. Hence,
\[
\P_{\a}(E_t,\R^n)\leq \P_{\a}(F_t,\R^n),
\]
or equivalently,
\begin{align*}
\P_{\a}(E_t,\R^n\setminus \bar{\Omega})+\P_{\a}(E_t,\partial\Omega)&+\P_{\a}(E_t,\Omega)\\
&\leq \P_{\a}(F_t,\R^n\setminus \bar{\Omega})+\P_{\a}(F_t,\partial\Omega)+\P_{\a}(F_t,\Omega).
\end{align*} 
The characterization~\eqref{eqn:charaperim} of the $\a$-perimeter measure and the fact that $F_t=E_t$ on $\R^n\setminus \bar{\Omega}$ show that the above inequality reduces to 
\begin{equation}\label{eqn:aperimineqrn}
\Hh^{n-1}\lelbow\a(\partial^*E_t\cap\partial\Omega)+\P_{\a}(E_t,\Omega)\leq 
\Hh^{n-1}\lelbow\a(\partial^*F_t\cap\partial\Omega)+\P_{\a}(F_t,\Omega).
\end{equation} 
On the other hand, let us note that Lemma~\ref{lem:boundary} implies that the set $\partial E_t\cap\partial\Omega$ is $\Hh^{n-1}$-null for a.e. $t\in[a,b]$.  Indeed, $\{g=t\}$ is a $\Hh^{n-1}$-null set for all but countably many $t\in [a,b]$, since $\Hh^{n-1}(\partial\Omega)<\infty$. From this and~\eqref{eqn:bdryinclusion} we get $\Hh^{n-1}\lelbow\a(\partial^*E_t\cap\partial\Omega)=0$ for a.e. $t\in [a,b]$. Thereby, in light of \eqref{eqn:aperimineqrn}, we will have established~\eqref{eqn:aperimineqomega} once we prove 
\begin{equation}\label{eqn:auxeqn}
\Hh^{n-1}\lelbow\a(\partial^*F_t\cap\partial\Omega)=0.
\end{equation} 
This will be argued in the same way as before, once we are able to prove that
\begin{equation}\label{eqn:techineq2}
\partial_MF_t\cap\partial\Omega\subset \{g=t\}.
\end{equation}
Let us recall $g$ is the trace on $\partial\Omega$ of $v$ admissible in~\eqref{eqn:aLGP}, so for $\Hh^{n-1}$-a.e. $x\in\partial\Omega$,
\begin{equation}\label{eqn:integralineq}
\lim_{r\to0}\dashint_{B_r(x)\cap\Omega}|v(y)-g(x)|\,dy=0.
\end{equation}
(cf. \cite[\S 5.14]{ziemer1989weakly}). Thus in order to prove~\eqref{eqn:techineq2}, we consider any $x\in\partial\Omega$ as in~\eqref{eqn:integralineq} such that $x\in \{g<t\}$, say $g(x)=t-\delta$ with $\delta>0$. It follows that
\begin{align*}
0&=\lim_{r\to0}\frac{1}{|B_r(x)\cap\Omega|}\int_{B_r(x)\cap\Omega\cap\{v\geq t\}}|v(y)-g(x)|\,dy\\[0.2cm]
&\geq \delta\lim_{r\to0}\frac{|B_r(x)\cap\Omega\cap\{v\geq t\}|}{|B_r(x)\cap\Omega|}.
\end{align*}
Analogously, as $g$ is the trace of $\bar{v}\in BV(\R^n\setminus \Omega)$, the argument above shows 
\[
\lim_{r\to0}\frac{|B_r(x)\cap (\R^n\setminus \Omega)\cap \{\bar{v}\geq t\}|}{|B_r(x)\cap (\R^n\setminus \Omega)|}=0.
\]
These two identities above imply 
\[
\overline{\Theta}(\{\bar{v}\geq t\},x):=\limsup_{r\to0}\frac{|B_r(x)\cap \{\bar{v}\geq t\}|}{|B_r(x)|}=0,
\]
whence $x\notin \partial_M \{\bar{v}\geq t\}$, and so $\{g<t\}\subset\partial\Omega\setminus \partial_MF_t$. In a similar fashion, we can argue that $\{g>t\}\subset \partial\Omega\setminus \partial_MF_t$  by means of~\eqref{eqn:integralineq}, in light of
\[
\underline{\Theta}(\{\bar{v}\geq t\},x):=\liminf_{r\to0}\frac{|B_r(x)\cap \{\bar{v}\geq t\}|}{|B_r(x)|}\geq 1-\overline{\Theta}(\{\bar{v}<t\},x)=1,
\]
which holds for a.e $x\in\partial\Omega$ with $g(x)>t$. The conclusion is $\{g\neq t\}\subset \partial\Omega\setminus\partial_M F_t$, which finishes the proof of Theorem~\ref{thm:main1}.
\end{proof}

\medskip
\medskip

\bibliographystyle{amsplain}
%\bibliography{weightedLGPref}

\end{document}